\newcommand{\R}{\mathbb R}
\newcommand{\N}{\mathbb N}
\newcommand{\Z}{\mathbb Z}
\newcommand{\im}{\mathrm{Im }}
\newcommand{\ve}{\underline e}
\newcommand{\vx}{\underline x}
\newcommand{\vy}{\underline y}
\newcommand{\vv}{\underline v}
\newtheorem{thm}{Theorem}[section]
\newtheorem{pro}[thm]{Proposition}
\newtheorem{lem}[thm]{Lemma}
\theoremstyle{definition}
\newtheorem{exam}[thm]{Example}     
\begin{document}

\title[Cohomology of the continuous hull]
 {Cohomology of the continuous hull of a combinatorial pentagonal tiling}

\author{Maria Ramirez-Solano}
\thanks{}

\address{Department of Mathematics, University of Copenhagen, Universitetsparken 5,
2100 K\o benhavn \O ,
 Denmark.}

\email{mrs@math.ku.dk}

\keywords{}

\subjclass{}

\thanks{Supported by the Danish National Research Foundation through the Centre
for Symmetry and Deformation (DNRF92), and by the Faculty of Science of the University of Copenhagen.}

\begin{abstract}
In \cite{MRScontinuoushull} we constructed the continuous hull for the combinatorics of "A regular pentagonal
tiling of the plane", and in \cite{MRSinverselimit} we showed how we could write this hull as an inverse limit.
In this paper we show how to compute the cohomology of the hull and its topological K-theory.
\end{abstract}

\maketitle
The continuous hull $\Omega$ for a combinatorial pentagonal tiling defined in \cite{MRScontinuoushull} is written as an inverse limit of finite CW-complexes in \cite{MRSinverselimit}. Since cohomology reverses the arrows, and Cech cohomology behaves well under inverse limits, the Cech cohomology of the continuous hull is the direct limit of the Cech cohomology of the finite CW-complexes.
 This direct limit is computed by using the fact that the Cech cohomology of a finite CW-complex is the same as the cellular cohomology of the finite CW-complex, as the latter is easier to compute.
These steps are well-known for Euclidean tilings. See for instance \cite{PutnamBible95}, \cite{Sadun08}.

Cech Cohomology gives interesting information about the tiling space of an Euclidean tiling. For instance,  the zero cohomology group $H^0=\Z$ because it measures connected components and the continuous Hull has one connected component. Homology, on the other hand, gives useless information, for
$H_0$ is freely generated by an uncountable set and $H_n=0$, for  $n\in \N$. (See \cite{Sadun08} for more details).

The main result of this paper is Theorem \ref{t:cohomologyofOmega}, which tells us the Cech cohomology of the continuous hull $\Omega$.
We divide this paper into four sections.
In the first section, we give an overview of the cellular cohomology for a finite CW-complex.
The first problem that arises is the computation of the kernel of an integer matrix modulo the image of another one.
We take care of this problem in the second section using the Smith normal form of the matrix.
In the third section we give an overview of direct limits of sequences of $\Z$-modules and integer matrices, plus some techniques to compute some of them.
In the last section we compute the Cech cohomology of the continuous hull $\Omega$ using the formulas and definitions from the previous 3 sections.

\newpage

\section{General theory on cellular cohomology}
Suppose that $\Gamma$ is a finite CW-complex of dimension 2, with $F$ faces $\{f_1,\ldots,$ $f_F\}$, $E$ edges $\{e_1,\ldots,e_n\}$, and $V$ vertices $\{v_1,\ldots,v_V\}$.  The cellular chain complex
over $\Z$ for $\Gamma$ is defined as the sequence of maps $C_\bullet:=\{d_n:C_n \to C_{n-1} \}_{n\in\N_0}$, i.e.
$$\xymatrix{0\ar[r]&C_2\ar[r]^{d_2}&C_1\ar[r]^{d_1}&C_0\ar[r]&0},$$
 such that
\begin{itemize}
  \item $C_n=0$ for $n>2$.
  \item  $C_2$ is the $\Z$-module with basis $\{f_1,\ldots,f_F\}$. Hence $C_2\cong\Z^F$.
  \item  $C_1$ is the $\Z$-module with basis $\{e_1,\ldots,e_E\}$. Hence $C_1\cong\Z^E$.
  \item  $C_0$ is the $\Z$-module with basis $\{v_1,\ldots,v_V\}$. Hence $C_0\cong\Z^V$.
  \item The boundary map $d_2$ sends each face to the sum of its edges (there are $\pm$ signs taking care of orientation).
  \item The boundary map $d_1$ sends each edge to the difference of its vertices. Hence $d_1\circ d_2=0$.
\end{itemize}
An element of the kernel $Z_i(C_\bullet)\subset C_i$ is called a cycle and an element of the image $B_i(C_\bullet)\subset C_i$ is called a boundary.
We define the $i$-th homology group $H_i(C_\bullet):=Z_i(C_\bullet)/B_i(C_\bullet)$, which can be done because $C_i$ is in particular an Abelian group and the property $d_i\circ d_{i+1}=0$ imply that $B_i(C_\bullet)\subset Z_i(C_\bullet)$.
We define the sequence of  groups $H_\bullet(C_\bullet):=\{H_i(C_\bullet)\}_{i\in\N_0}$.
The cellular homology for the CW-complex $\Gamma$ is defined as $H_\bullet(\Gamma):=H_\bullet(C_\bullet)$.

The cellular cochain complex for $\Gamma$ is defined by
$$C^\bullet:=\mathrm{Hom}(C_\bullet,\Z):=\{d^n:\mathrm{Hom}_\Z(C_n,\Z)\to \mathrm{Hom}_\Z(C_{n+1},\Z)\}_{n\in\N_0},$$
where $d^n:=d_{n+1}^*$ is the dual map of the map $d_{n+1}:C_{n+1}\to C_{n}$, and so we can write $C^\bullet$ as
$$\xymatrix{0&C^2\ar[l]&C^1\ar[l]_{d_2^*}^{d^1}&C^0\ar[l]_{d_1^*}^{d^0}&0\ar[l]},$$
with $C^n:=\mathrm{Hom}_\Z(C_n,\Z)$. Recall that the dual of a $\Z$-linear map $d:X\to Y$ is a linear map $d^*:Y^*\to X^*$ defined by $d^*(\phi):=\phi\circ d$ between the  dual spaces $Y^*:=\mathrm{Hom}_\Z(Y,\Z)$ and $X^*:=\mathrm{Hom}_\Z(X,\Z)$. If the linear map $d$ is represented by a matrix $A$ with respect to two bases of $X$ and $Y$, then the linear map $d^*$ is represented by the transpose matrix $A^T$ with respect to the dual bases of $Y^*$ and $X^*$. This can be seen from the following diagram
$$\xymatrix{\Z^a\ar@/^/[r]^{A}\ar[d]_f^{\wr}&\Z^b\ar[l]^{A^T}\ar[d]^g_{\wr}\\
\mathrm{Hom}_{\Z}(\Z^a,\Z)&\mathrm{Hom}_{\Z}(\Z^b,\Z)\ar[l]^{A^*},
}$$
$$fA^Tg^{-1}\phi=fA^T\phi(I)^T=(\vx\mapsto \phi(I)A\vx)=\phi\circ A=A^*\phi,$$
where $g(\vv):=\vx\mapsto \vv^T\vx$,\,\, $g^{-1}(\phi):=\phi(I)^T$,\,\, $\phi(I):=(\phi(\ve_1),\ldots(\ve_b))$, with $\ve_i$ being the columns of the $b\times b$ identity matrix, and the map $f$ is defined similarly.
Notice that for any $\phi\in C^n$, $$d^{n+1}\circ d^{n}=d_{n+2}^*\circ d_{n+1}^*(\phi)=\phi\circ d_{n+1}\circ d_{n+2}=0.$$
We define the $i$-th cohomology group $H^i(C^\bullet):=Z^i(C^\bullet)/B^i(C^\bullet)$, and the sequence of  groups $H^\bullet(C^\bullet):=\{H^i(C^\bullet)\}_{i\in\N_0}$.
The cellular cohomology for the CW-complex $\Gamma$ is defined as $H^\bullet(\Gamma):=H^\bullet(C^\bullet)$.
The cohomology groups are given by
\begin{eqnarray*}
  H^2(\Gamma)&=&C^2/ \text{Im}\, d_2^*=\text{coker}\, d_2^*\\
  H^1(\Gamma)&=&\ker d_2^*/ \text{Im}\, d_1^*\\
  H^0(\Gamma)&=&\ker d_1^*.
\end{eqnarray*}

 If $\gamma:\Gamma\to\Gamma$ is a cellular map from the finite CW-complex to itself, then
 by homology theory it induces a map $\gamma^\bullet:C^\bullet\to C^\bullet$ between the cochain complex $C^\bullet$ of $\Gamma$ and itself.
Let $\gamma^\bullet:=\{\gamma_n:C^n\to C^n\}_{n\in\N_0}$, i.e.
\begin{equation}\label{eq:gammadot}
\xymatrix{0\ar[r]&C^0\ar[r]^{d^0}\ar[d]_{\gamma_0}&C^1\ar[r]^{d^1}\ar[d]_{\gamma_1}&C^2\ar[r]\ar[d]_{\gamma_2}&0\\
            0\ar[r]&C^0\ar[r]^{d^0}&C^1\ar[r]^{d^1}&C^2\ar[r]&0}
\end{equation}
 be such commutative diagram. Since the diagram commutes, if $x\in \ker d^i$ then $\gamma_i(x)\in \ker d^i$, as $d^i(x)=0$ implies that $0=\gamma_{i+1}d^i(x)=d^i\gamma_i(x)$.
 The cochain map $\gamma^\bullet$ induces a sequence of maps $\{H^i(\gamma^\bullet):H^i(C^\bullet)\to H^i(C^\bullet)\}_{i\in\N_0}$,
where $H^i(\gamma^\bullet)$ is given by $[x]_{\im d^{i-1}}\mapsto [\gamma_i(x)]_{\im d^{i-1}}$.
An alternative notation of $H^i(C^\bullet)$ is $H^i(\Gamma,\Z)$.\\

\section{How to compute cohomology groups.}
We remark that  $\Z/n:=\Z/n\Z$, $n\in\N_0$, where $\Z/1=\Z/\Z=\{0\}$ and $\Z/0=\Z/0\Z=\Z$.
\begin{lem}\label{l:cokernel}
Suppose that the  matrix $A$ of a linear map $f:\Z^a\to\Z^b$ is an integer matrix of dimension $b\times a$.
Let $D:=diag(n_1,\ldots,n_{min(a,b)})$ be the Smith normal form of matrix $A$, and hence there exist invertible matrices $P,Q$ such that $D=PAQ$.
Then
$$\text{Coker } f=\frac{\Z^b}{A \Z^a}\cong\left\{
\begin{array}{l}
\frac{\Z}{n_1}\oplus\cdots\oplus\frac{\Z}{n_b} \qquad\qquad\,\,\,\,\text{ if }  b\le a\\
\frac{\Z}{n_1}\oplus\cdots\oplus\frac{\Z}{n_a}\oplus\Z^{b-a} \quad\, \text{ if } b>a.\\
\end{array}
\right.$$
The isomorphisms are given by
\begin{eqnarray*}
[x]_{A\Z^a} & \mapsto & ([(Px)_1]_{n_1},\ldots,[(Px)_b]_{n_b})\\
\, [x]_{A\Z^a} & \mapsto & ([(Px)_1]_{n_1},\ldots,[(Px)_a]_{n_a},(Px)_{a+1},\ldots,(Px)_{b})\\
([y_1]_{n_1},\ldots,[y_b]_{n_b}) & \mapsto &[P^{-1}y]_{A\Z^a} \\
([y_1]_{n_1},\ldots,[y_a]_{n_a},y_{a+1},\ldots,y_{b}) & \mapsto &  [P^{-1}y]_{A\Z^a},
\end{eqnarray*}
where the subindexes mean coordinate entries, i.e. $x=(x_1,\ldots,x_b)$.
\end{lem}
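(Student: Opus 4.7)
The plan is to reduce to the diagonal case using the Smith normal form and then read off the cokernel coordinate by coordinate.

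First, I would rewrite the relation $D = PAQ$ as $A = P^{-1}DQ^{-1}$, so that
$$A\Z^a = P^{-1}DQ^{-1}\Z^a = P^{-1}D\Z^a,$$
using that $Q^{-1}$ is a $\Z$-linear automorphism of $\Z^a$, so $Q^{-1}\Z^a = \Z^a$. Next I would observe that $P$, being invertible over $\Z$, is likewise an automorphism of $\Z^b$ and therefore descends to an isomorphism of quotient groups
$$\varphi : \Z^b / A\Z^a \longrightarrow \Z^b / D\Z^a, \qquad [x]_{A\Z^a} \mapsto [Px]_{D\Z^a}.$$
This is well-defined because $Px \in P(A\Z^a) = PP^{-1}D\Z^a = D\Z^a$ whenever $x \in A\Z^a$, and it is bijective with inverse $[y]_{D\Z^a} \mapsto [P^{-1}y]_{A\Z^a}$.

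The hard work is then reduced to understanding $\Z^b/D\Z^a$ for a ``diagonal'' (possibly non-square) matrix $D$ whose only nonzero entries are $n_1,\ldots,n_{\min(a,b)}$ on the diagonal. I would split into the two cases. If $b \le a$, the image $D\Z^a$ is exactly $n_1\Z \oplus \cdots \oplus n_b\Z$ (the columns of $D$ beyond the $b$-th are irrelevant since they are zero, and the first $b$ columns give each $n_i e_i$), yielding
$$\Z^b / D\Z^a \;\cong\; \Z/n_1 \oplus \cdots \oplus \Z/n_b$$
componentwise via $[y] \mapsto ([y_1]_{n_1},\ldots,[y_b]_{n_b})$. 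If $b > a$, then $D\Z^a = n_1\Z \oplus \cdots \oplus n_a \Z \oplus 0 \oplus \cdots \oplus 0 \subset \Z^b$, so the quotient picks up a free $\Z^{b-a}$ summand from the last $b-a$ coordinates, giving
$$\Z^b / D\Z^a \;\cong\; \Z/n_1 \oplus \cdots \oplus \Z/n_a \oplus \Z^{b-a}.$$
Composing the coordinate map with $\varphi$ produces the four explicit isomorphisms displayed in the statement.

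There is no real obstacle here; the only thing to be careful about is tracking invertibility over $\Z$ (not just over $\Q$) for $P$ and $Q$, which is guaranteed by the definition of Smith normal form, and making sure the two cases $b \le a$ and $b > a$ are handled consistently, since the number of diagonal entries is $\min(a,b)$ while the quotient lives in $\Z^b$. Once the reduction $\Z^b/A\Z^a \cong \Z^b/D\Z^a$ is in place, the rest is a direct reading of the diagonal structure.
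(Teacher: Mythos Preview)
Your argument is correct and is essentially the same as the paper's: both reduce the cokernel of $A$ to the cokernel of the diagonal $D$ via the invertible change of basis $P$, then read off the quotient coordinatewise in the two cases $b\le a$ and $b>a$. The only cosmetic difference is that the paper packages this as a single surjection $\phi:\Z^b\to\bigoplus_i \Z/n_i$ and computes $\ker\phi=A\Z^a$ via the first isomorphism theorem, whereas you factor through the intermediate quotient $\Z^b/D\Z^a$; these are two presentations of the same computation.
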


\begin{proof}
Since the matrix $A$  is an integer matrix, it has a Smith normal form. That is, we can write $A$ as a product of three matrices $P^{-1}DQ^{-1}$,
 where $P$ is a change of basis in $\Z^b$,  $Q$ is a change of basis of $\Z^a$, and $D=diag(n_1,\ldots,n_r,0,\ldots,0)$ is a diagonal matrix with nonzero integers in the first $r$ entries of the diagonal, and $n_i$ divides $n_{i+1}$.
Hence $P$ is a $b\times b$ matrix with determinant $\pm 1$, and $Q$ is a $a\times a$ matrix with determinant $\pm 1$.
Suppose that $b\le a$.
Define the map $\phi:\Z^b\to \frac{\Z}{n_1}\oplus\cdots\oplus\frac{\Z}{n_b}$ by
  $$\phi(x):=([(Px)_1]_{n_1},\ldots,[(Px)_b]_{n_b}),$$
  where $Px=((Px)_1,\ldots,(Px)_b)$. Its kernel is
  \begin{eqnarray*}
  \ker \phi&=&\{x\in \Z^b\mid (Px)_1=n_1y_1,\ldots, (Px)_b=n_by_b\text{ for any } y\in \Z^a\}\\
  &=&\{x\in \Z^b\mid Px=Dy\text{ for any } y\in \Z^a\}\\
  &=&\{x\in \Z^b\mid Px=DQ^{-1}z\text{ for any } z\in \Z^a\}\\
  &=&P^{-1}DQ^{-1}\Z^a=A\Z^a.
  \end{eqnarray*}
  Since $\phi$ is surjective, $\frac{\Z^b}{A\Z^a}=\frac{\Z^b}{\ker \phi}\cong\text{Im} \phi= \frac{\Z}{n_1}\oplus \cdots\oplus\frac{\Z}{n_b}$, where the isomorphism is given by
  $[x]_{A\Z^a}  \mapsto \phi(x)= ([(Px)_1]_{n_1},\ldots,[(Px)_b]_{n_b})$.
To find the inverse, let $([y_1]_{n_1},\ldots,[y_b]_{n_b})=([(Px)_1]_{n_1},\ldots,[(Px)_b]_{n_b})$.
Then $(Px)_1=y_1+n_1z_1, \ldots, (Px)_b=y_b+n_bz_b$ for some $z\in\Z^a$. Hence $Px=y+Dz$. Hence $x=P^{-1}y+P^{-1}Dz=P^{-1}y+P^{-1}DQ^{-1}w=P^{-1}y+Aw$ for some $w\in\Z^a$. Thus, the inverse of $([y_1]_{n_1},\ldots,[y_b]_{n_b})$ is $[P^{-1}y+Aw]_{A\Z^a}=[P^{-1}y]_{A\Z^a}$.
This completes the first case.

Now suppose that $b>a$.
Define the map $\phi:\Z^b\to \frac{\Z}{n_1}\oplus\cdots\oplus\frac{\Z}{n_a}\oplus\Z^{b-a}$ by
  $$\phi(x):=([(Px)_1]_{n_1},\ldots,[(Px)_a]_{n_a},(Px)_{a+1},\ldots,(Px)_{b}),$$
  where $Px=((Px)_1,\ldots,(Px)_b)$. Its kernel is
  \begin{eqnarray*}
  \ker \phi&=&\{x\in \Z^b\mid (Px)_1=n_1y_1,\ldots, (Px)_a=n_ay_a,\\
  &&\qquad\qquad (Px)_{a+1}=0,\ldots, (Px)_b=0\text{ for any } y\in \Z^a\}\\
  &=&\{x\in \Z^b\mid Px=Dy\text{ for any } y\in \Z^a\}\\
  &=&\{x\in \Z^b\mid Px=DQ^{-1}z\text{ for any } z\in \Z^a\}\\
  &=&P^{-1}DQ^{-1}\Z^a=A\Z^a.
  \end{eqnarray*}
  Since $\phi$ is surjective, $\frac{\Z^b}{A\Z^a}=\frac{\Z^b}{\ker \phi}\cong\text{Im} \phi= \frac{\Z}{n_1}\oplus \cdots\oplus\frac{\Z}{n_a}\oplus\Z^{b-a}.$
  To find the inverse, let $$([y_1]_{n_1},\ldots,[y_a]_{n_a},y_{a+1},\ldots,y_b)=([(Px)_1]_{n_1},\ldots,[(Px)_a]_{n_a},(Px)_{a+1},\ldots,(Px)_b).$$
Then $(Px)_1=y_1+n_1z_1, \ldots, (Px)_a=y_a+n_az_a$, $(Px)_{a+1}=y_{a+1}$, $(Px)_b=y_b$ for some $z\in\Z^a$. Hence $Px=y+Dz$. Hence $x=P^{-1}y+P^{-1}Dz=P^{-1}y+P^{-1}DQ^{-1}w=P^{-1}y+Aw$ for some $w\in\Z^a$. Thus, the inverse of $([y_1]_{n_1},\ldots,[y_a]_{n_a},y_{a+1},\ldots,y_b)$ is $[P^{-1}y+Aw]_{A\Z^a}=[P^{-1}y]_{A\Z^a}$.
This completes the  second case.
\end{proof}
For reference purposes, we rewrite the above lemma using terminology from the section on cellular cohomology introduced earlier.
This lemma tells us how to compute the second cohomology group of the finite CW-complex $\Gamma$.
\begin{lem}\label{l:secondcohomology}
Suppose that the  matrix $A^1$ of the boundary map $d^1:\Z^E\to\Z^F$ is an integer matrix of dimension $F\times E$.
Let $D:=diag(n_1,\ldots,n_{min(E,F)})$ be the Smith normal form of matrix $A^1$, and hence there exist invertible matrices $P,Q$ such that $D=PA^1Q$.
Then the second cohomology group  $H^2(\Gamma)$ is

$$H^2(\Gamma)=\text{Coker } d^1=\frac{\Z^F}{A^1 \Z^E}\cong\left\{
\begin{array}{l}
\frac{\Z}{n_1}\oplus\cdots\oplus\frac{\Z}{n_F} \qquad\qquad\,\,\,\,\text{ if }  F\le E\\
\frac{\Z}{n_1}\oplus\cdots\oplus\frac{\Z}{n_E}\oplus\Z^{F-E} \quad\, \text{ if } F>E.\\
\end{array}
\right.$$
The isomorphisms are given by
\begin{eqnarray*}
[x]_{A^1\Z^E} & \mapsto & ([(Px)_1]_{n_1},\ldots,[(Px)_F]_{n_F}),\\
\, [x]_{A^1\Z^E} & \mapsto & ([(Px)_1]_{n_1},\ldots,[(Px)_E]_{n_E},(Px)_{E+1},\ldots,(Px)_{F}),
\end{eqnarray*}
\begin{eqnarray*}
([y_1]_{n_1},\ldots,[y_F]_{n_F}) & \mapsto &[P^{-1}y]_{A^1\Z^E}, \\
([y_1]_{n_1},\ldots,[y_E]_{n_E},y_{E+1},\ldots,y_{F}) & \mapsto &  [P^{-1}y]_{A^1\Z^E},
\end{eqnarray*}
where the subindexes mean coordinate entries, i.e. $x=(x_1,\ldots,x_F)$.
\end{lem}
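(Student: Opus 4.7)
The plan is to recognize that this statement is essentially a direct specialization of Lemma \ref{l:cokernel} to the particular linear map $d^1:\Z^E\to\Z^F$ appearing in the cellular cochain complex, so no new argument is required beyond invoking that lemma with the correct substitutions.

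First I would record the identification $H^2(\Gamma)=\mathrm{coker}\, d^1=\Z^F/A^1\Z^E$, which is the formula already established in the overview of cellular cohomology (the line $H^2(\Gamma)=C^2/\mathrm{Im}\, d_2^*=\mathrm{coker}\, d_2^*$, combined with the notation $d^1=d_2^*$ and the matrix representation $A^1$). This reduces the problem to computing the cokernel of the integer matrix $A^1$ of size $F\times E$.

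Next I would apply Lemma \ref{l:cokernel} with the substitutions $a:=E$, $b:=F$, $f:=d^1$, and $A:=A^1$. The hypothesis of that lemma — that $A^1$ is an integer matrix with Smith normal form $D=PA^1Q$ for invertible $P,Q$ — is exactly what is assumed here. The two cases $b\le a$ and $b>a$ in Lemma \ref{l:cokernel} become the two cases $F\le E$ and $F>E$ in the present statement, and the explicit isomorphisms $[x]_{A\Z^a}\mapsto([(Px)_1]_{n_1},\ldots)$ together with their inverses $[P^{-1}y]_{A\Z^a}\leftarrow([y_1]_{n_1},\ldots)$ translate verbatim into the four displayed maps of the conclusion.

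There is no genuine obstacle in this proof; the only thing to be careful about is book-keeping of dimensions (ensuring $F$ plays the role of $b$ and $E$ the role of $a$, so that $P$ is $F\times F$ and $Q$ is $E\times E$), and confirming that the matrix $A^1$ really represents $d^1$ with respect to the standard dual bases (which is the content of the diagram involving $fA^Tg^{-1}\phi=A^*\phi$ from Section~1). Once these identifications are made the statement is immediate from Lemma~\ref{l:cokernel}.
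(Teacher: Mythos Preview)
Your proposal is correct and matches the paper's own treatment: the paper explicitly introduces this lemma as a rewriting of Lemma~\ref{l:cokernel} in the notation of the cellular cochain complex, so the entire argument is the substitution $a:=E$, $b:=F$, $A:=A^1$ that you describe.
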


\begin{lem}\label{l:isomchainisomhomology}
  Suppose that $\phi_{\bullet}:C_{\bullet}\tilde{\longrightarrow} E_\bullet$ is an isomorphism between two chain complexes of $\Z$-Modules.
  Then their homology groups  are isomorphic, i.e. $H_i(C_\bullet)\cong H_i(E_\bullet)$.
  The ith-isomorphism  is given by
  \begin{eqnarray*}
    [x]_{B_i(C_\bullet)}&\mapsto&[\phi_i(x)]_{B_i(E_\bullet)}\\
   \, [y]_{B_i(E_\bullet)}&\mapsto& [\phi_i^{-1}(y)]_{B_i(C_\bullet)}.
  \end{eqnarray*}
  The same holds for cochains.
\end{lem}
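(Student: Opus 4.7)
The plan is to show that the component maps $\phi_i$ descend to well-defined homomorphisms on homology and that the inverses $\phi_i^{-1}$ do likewise, then check that these pass to mutually inverse homomorphisms on $H_i$.

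First I would unpack what it means for $\phi_\bullet$ to be an isomorphism of chain complexes: for each $i$, $\phi_i:C_i\to E_i$ is a $\Z$-module isomorphism and the squares commute, i.e. $d_i^E\circ\phi_i=\phi_{i-1}\circ d_i^C$. From this I immediately deduce two facts. First, $\phi_i$ sends cycles to cycles: if $d_i^C(x)=0$ then $d_i^E(\phi_i(x))=\phi_{i-1}(d_i^C(x))=0$, so $\phi_i(Z_i(C_\bullet))\subseteq Z_i(E_\bullet)$. Second, $\phi_i$ sends boundaries to boundaries: if $x=d_{i+1}^C(y)$ then $\phi_i(x)=\phi_i(d_{i+1}^C(y))=d_{i+1}^E(\phi_{i+1}(y))\in B_i(E_\bullet)$.

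Next I would observe that the inverse maps $\phi_i^{-1}:E_i\to C_i$ also form a chain map. This is a routine check: applying $\phi_{i-1}^{-1}$ on the left and $\phi_i^{-1}$ on the right to $\phi_{i-1}\circ d_i^C=d_i^E\circ\phi_i$ gives $d_i^C\circ\phi_i^{-1}=\phi_{i-1}^{-1}\circ d_i^E$. Hence, by the same argument as above, $\phi_i^{-1}$ maps $Z_i(E_\bullet)$ into $Z_i(C_\bullet)$ and $B_i(E_\bullet)$ into $B_i(C_\bullet)$.

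Because $\phi_i$ preserves cycles and boundaries, the assignment $[x]_{B_i(C_\bullet)}\mapsto[\phi_i(x)]_{B_i(E_\bullet)}$ is a well-defined group homomorphism $H_i(C_\bullet)\to H_i(E_\bullet)$; call it $\Phi_i$. Analogously $\phi_i^{-1}$ induces $\Psi_i:H_i(E_\bullet)\to H_i(C_\bullet)$ given by $[y]\mapsto[\phi_i^{-1}(y)]$. Finally I would verify $\Psi_i\circ\Phi_i=\mathrm{id}$ and $\Phi_i\circ\Psi_i=\mathrm{id}$, which follow at the level of representatives from $\phi_i^{-1}\circ\phi_i=\mathrm{id}_{C_i}$ and $\phi_i\circ\phi_i^{-1}=\mathrm{id}_{E_i}$. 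This gives the stated isomorphism.

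There is no serious obstacle: everything is essentially functoriality of homology. The only thing that requires a moment's care is verifying that $\phi^{-1}_\bullet$ is itself a chain map, which is the step I would not want to skip. For cochains the argument is formally identical, applying $\mathrm{Hom}(-,\Z)$ to obtain an isomorphism of cochain complexes $\phi^\bullet$ (with the arrows reversed) and running the same cycles-and-coboundaries bookkeeping to obtain the induced isomorphism $H^i(C^\bullet)\cong H^i(E^\bullet)$ with the analogous explicit formula.
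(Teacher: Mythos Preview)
Your proposal is correct and follows essentially the same approach as the paper: both arguments verify that $\phi_i$ (and $\phi_i^{-1}$) carry cycles to cycles and boundaries to boundaries, so that the assignments $[x]\mapsto[\phi_i(x)]$ and $[y]\mapsto[\phi_i^{-1}(y)]$ are well-defined and mutually inverse on homology. If anything, your version is slightly cleaner in that you explicitly check $\phi_\bullet^{-1}$ is a chain map, whereas the paper simply says ``similarly, using instead $\phi^{-1}$''; the paper also phrases the descent step via composition with the quotient map $q_i$, but the content is identical.
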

\begin{proof}
  By assumption we have the following commutative diagram
$$\xymatrix{\ar[r]&C_{i+1}\ar[r]^{d_{i+1}}\ar[d]_{\phi_{i+1}}^{\wr}&C_i\ar[r]^{d_i}\ar[d]_{\phi_i}^{\wr}&C_{i-1}\ar[r]\ar[d]_{\phi_{i-1}}^{\wr}&\\
            \ar[r]&E_{i+1}\ar[r]^{\partial_{i+1}}&E_i\ar[r]^{\partial_i}&E_{i-1}\ar[r]&.}$$
We would like to show that $\frac{\ker d_i}{\text{Im } d_{i+1}}\cong \frac{\ker \partial_i}{\text{Im } \partial_{i+1}}$.
We have $\ker d_i\cong \ker \partial_i$ and $\text{Im } d_{i+1}\cong\text{Im }\partial_{i+1}$ as
$x\in\ker d_i\iff \phi_i(x)\in\ker\partial_i$ and $y\in\text{Im }d_{i+1}\iff \phi_{i}(y)\in\text{Im }\partial_{i+1}$.
These isomorphisms of the kernel and image are a restriction of $\phi_i$.
Since the quotient map $q_i:\ker\partial_i\to \ker\partial_i/\im\partial_{i+1}$ is continuous, the map
$q_i\circ\phi_i:\ker d_i:\to \ker \partial_i/\im\partial_{i+1}$ given by $x\mapsto[\phi_i(x)]_{\text{Im }\partial_{i+1}}$ is continuous.
If $x,x'\in\ker d_i$ are equivalent, then $x=x'+z$ for some $z\in \text{Im }d_{i+1}$.
Then $[\phi_i(x)]_{\text{Im }\partial_{i+1}}=[\phi_i(x')+\phi_i(z)]_{\text{Im }\partial_{i+1}}=[\phi_i(x')]_{\text{Im }\partial_{i+1}}$ because
$\phi_i(z)\in\text{Im } \partial_{i+1}$.
Hence $q_i\circ\phi_i$ descends to the quotient, and so there is a unique continuous map
 $\psi_i: \frac{\ker d_i}{\text{Im } d_{i+1}}\to \frac{\ker \partial_i}{\text{Im } \partial_{i+1}}$ given by $\psi_i([x]_{\text{Im } d_{i+1}}):=[\phi_i(x)]_{\text{Im } \partial_{i+1}}$.
 Similarly, using instead $\phi^{-1}$, there is  a unique continuous map
  $\psi_i^{-1}: \frac{\ker \partial_i}{\text{Im } \partial_{i+1}}\to \frac{\ker d_i}{\text{Im } d_{i+1}}$ given by $\psi_i^{-1}([x]_{\text{Im } \partial_{i+1}}):=[\phi_i^{-1}(x)]_{\text{Im } d_{i+1}}$, which is the inverse of $\psi_i$.

\end{proof}

Continuing using the terminology of the section on cellular cohomology, the next lemma tells us how to compute the first cohomology group $H^1(\Gamma)$.
\begin{lem}\label{l:cohomology1}
  Suppose that the matrix $A^1$ of the boundary map $d^1:\Z^E\to \Z^F$ is an integer matrix.
  Let $D:=diag(n_1,\ldots,n_r,0,\ldots,0)$ be the Smith normal form of $A^1$, and hence there exist invertible matrices $P,Q$ such that $D=PA^1Q$, and $n_i\ne0$.
  Let $e_i$ be columns of the $E\times E$ identity matrix.
  Let $J:=[e_{r+1}\,\,\cdots\,\,e_E]$ be the matrix containing $E-r$ columns.
  Then the 1-cohomology group is
  \begin{eqnarray*}
  H^1(\Gamma)&=&  \frac{\ker d^1}{\text {Im } d^0}\cong \frac{\Z^{E-r}}{J^TQ^{-1}d^0\Z^V}\cong\\\\
  &\cong&\left\{
  \begin{array}{l}
    \frac{\Z}{m_1}\oplus\cdots\oplus\frac{\Z}{m_{E-r}} \qquad\qquad\,\,\,\,\text{ if } E-r-V\le0\\
    \frac{\Z}{m_1}\oplus\cdots\oplus\frac{\Z}{m_V}\oplus\Z^{E-r-V} \,\,\,\,\,\text{ if } E-r-V> 0,\\
   \end{array}
   \right.
     \end{eqnarray*}
  where  $\tilde D:=diag(m_1,\ldots,m_{min(E-r,V)})$ is the Smith normal form of matrix $J^TQ^{-1}d^0$,  and hence there exists invertible matrices $\tilde P,\tilde Q$ such that $\tilde D=\tilde P J^TQ^{-1}d^0 \tilde Q$.
  The isomorphisms for the second case (when $E-r-V>0$) are
\begin{scriptsize}
\begin{eqnarray*}
  [x]_{\im d^0}\mapsto
  ([(\tilde P J^T Q^{-1} x)_1]_{m_1},\ldots,[(\tilde P J^T Q^{-1} x)_V]_{m_V},(\tilde P J^T Q^{-1} x)_{V+1},(\tilde P J^T Q^{-1} x)_{E-r})\\
  ([y_1]_{m_1},\ldots,[y_V]_{m_V},y_{V+1},\ldots,y_{E-r})\mapsto [QJ\tilde P ^{-1} y]_{\im d^0}.\qquad\qquad\qquad\qquad\qquad\qquad\quad
\end{eqnarray*}
\end{scriptsize}

\end{lem}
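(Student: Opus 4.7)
The plan is to reduce $H^1(\Gamma)=\ker d^1/\im d^0$ to a cokernel computation, so that Lemma \ref{l:cokernel} can be applied. I begin by describing $\ker d^1$ concretely. Writing $A^1=P^{-1}DQ^{-1}$ and using that $D$ has exactly $r$ nonzero diagonal entries located in the top left, the condition $A^1x=0$ is equivalent to saying that the first $r$ coordinates of $Q^{-1}x$ vanish, i.e. $Q^{-1}x\in J\Z^{E-r}$, and therefore $\ker d^1=QJ\Z^{E-r}$. Since the columns of $J$ are distinct standard basis vectors, $J^TJ=I_{E-r}$, and hence the map $\alpha:\Z^{E-r}\to\ker d^1$, $y\mapsto QJy$, is a $\Z$-module isomorphism whose inverse is the restriction of $x\mapsto J^TQ^{-1}x$ to $\ker d^1$.

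Next I would transport $\im d^0$ through this identification. Since $d^1\circ d^0=0$ we have $\im d^0\subset\ker d^1$, and a direct computation gives $\alpha^{-1}(\im d^0)=J^TQ^{-1}d^0\,\Z^V$. By the universal property of the quotient (equivalently, by the cochain version of Lemma \ref{l:isomchainisomhomology}), $\alpha$ descends to an isomorphism
$$\frac{\ker d^1}{\im d^0}\;\cong\;\frac{\Z^{E-r}}{J^TQ^{-1}d^0\,\Z^V},$$
which is the first isomorphism asserted in the lemma.

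The right-hand side is the cokernel of the integer matrix $J^TQ^{-1}d^0$ of dimension $(E-r)\times V$, so the proof is completed by invoking Lemma \ref{l:cokernel} with $b=E-r$, $a=V$, and Smith normal form $\tilde D=\tilde P J^TQ^{-1}d^0\,\tilde Q$. The two alternatives $E-r-V\le 0$ and $E-r-V>0$ correspond to the cases $b\le a$ and $b>a$ of Lemma \ref{l:cokernel} and give the two direct-sum decompositions in the statement. Composing the explicit maps of Lemma \ref{l:cokernel} with $\alpha$ and $\alpha^{-1}$ then reproduces the formulas $[x]_{\im d^0}\mapsto([(\tilde PJ^TQ^{-1}x)_1]_{m_1},\ldots)$ and $(y_1,\ldots,y_{E-r})\mapsto[QJ\tilde P^{-1}y]_{\im d^0}$ written in the lemma.

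The only delicate point, and the main place for a slip, is the bookkeeping around $\alpha$: one must verify that $J^TJ=I_{E-r}$ indeed makes $\alpha$ a two-sided isomorphism onto $\ker d^1$, and that the transport of structure along $\alpha$ really carries the subgroup $\im d^0$ onto $J^TQ^{-1}d^0\,\Z^V$ and not some strictly larger image. Beyond this routine verification no new ideas are required; the lemma is essentially a mechanical corollary of Lemma \ref{l:cokernel}.
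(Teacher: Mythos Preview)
Your proof is correct and follows essentially the same approach as the paper. The only difference is packaging: the paper phrases the first step as a commutative diagram of cochain complexes (applying $Q^{-1}$ on the middle term and then invoking Lemma~\ref{l:isomchainisomhomology} to pass to $\ker D/\im Q^{-1}d^0$, followed by stripping the first $r$ zero coordinates via $J^T$), whereas you merge these two moves into the single isomorphism $\alpha=QJ:\Z^{E-r}\to\ker d^1$ with inverse $J^TQ^{-1}$; both routes end at the same cokernel and finish with Lemma~\ref{l:cokernel}.
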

\begin{proof}
  We have the following commutative diagram of cochain complexes:
$$\xymatrix{0\ar[r]&\Z^V\ar[r]^{d^0}\ar[d]_{id}&\Z^E\,\,\,\,\,\ar[r]^{P^{-1}DQ^{-1}}\ar[d]_{Q^{-1}}&\,\,\,\Z^F\ar[r]\ar[d]_{P}&0\\
            0\ar[r]&\Z^V\ar[r]^{Q^{-1}d^0}&Q^{-1}\Z^E\ar[r]^{D}&\Z^F\ar[r]&0}$$
By the previous Lemma \ref{l:isomchainisomhomology} $\frac{\ker d^1  }{\im d^0}=\frac{\ker P^{-1}DQ^{-1}  }{\im d^0}\cong \frac{\ker D}{\im Q^{-1}d^0}$, with $Q^{-1}$ inducing the isomorphism.
Since $D$ is a diagonal matrix, $\ker D=\{x\in Q^{-1}\Z^E\mid n_1x_1=0,\cdots,n_rx_r=0\}=0^r\oplus \Z^{E-r}$.
Since the image of $Q^{-1}d^0$ is contained in $\ker D$, we can remove the first $r$-entries of each vector in $\im Q^{-1}d^0$.
This can be accomplished by multiplying the matrix $J^T$ with $Q^{-1}d^0$, that is $J^T$ removes the first $r$ rows of the matrix $Q^{-1}d^0$. Hence $\frac{\ker D}{\im Q^{-1}d^0}=\frac{0^r\oplus \Z^{E-r}}{\im Q^{-1}d^0}\cong \frac{\Z^{E-r}}{J^TQ^{-1}d^0\Z^V}=\text{coker }J^TQ^{-1}d^0.$
The rest follows by Lemma \ref{l:cokernel} with $A=J^{T}Q^{-1}d^0$, $a=V$ and $b=E-r$.
\end{proof}

\begin{lem}\label{l:AZntoZr}
Let $A$ be a $n\times n$ integer matrix. Let $D:=\mathrm{diag}(\lambda_1,\ldots,\lambda_r,0,\ldots,0)$, be the Smith normal form of matrix $A$, and hence there exist $\Z$-invertible matrices $B,C$ such that $D=C^{-1}AB$, where $\lambda_i\ne0$ for all $i\le r$ for some $r\le n$. Let $\ve_i$, $i=1,\ldots,n$ be the columns of the $n\times n$ identity matrix. Let $\imath=[\ve_1,\ldots,\ve_r]$ be the inclusion map $\Z^r\to\Z^n$. Let $D':=\mathrm{diag}(\lambda_1,\ldots,\lambda_r)$ be the $r\times r$ diagonal matrix. Then $$A\Z^n\cong \Z^r,$$ where the isomorphisms are $D'^{-1}\imath^TC$, $ C \imath D'$.
\end{lem}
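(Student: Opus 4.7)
The plan is to reduce everything to the concrete diagonal case via the Smith normal form factorization and then recognize the subgroup $A\Z^n$ as the image of a free rank-$r$ module under an injective map. Starting from $D=C^{-1}AB$ we have $A=CDB^{-1}$, so
\[
A\Z^n \;=\; CDB^{-1}\Z^n \;=\; CD\Z^n,
\]
because $B$ is $\Z$-invertible and therefore $B^{-1}\Z^n=\Z^n$. Now the diagonal structure of $D=\mathrm{diag}(\lambda_1,\ldots,\lambda_r,0,\ldots,0)$ gives
\[
D\Z^n \;=\; \{(\lambda_1 x_1,\ldots,\lambda_r x_r,0,\ldots,0)\mid x\in\Z^n\} \;=\; \imath D'\Z^r,
\]
so $A\Z^n = C\imath D'\Z^r$. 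This already exhibits a surjection $\Z^r\to A\Z^n$ given by $z\mapsto C\imath D'z$.

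The second step is to check this map is an isomorphism and exhibit the inverse. Injectivity is immediate: each of the factors $C$, $\imath$ and $D'$ is injective as a $\Z$-linear map (since $C$ is $\Z$-invertible, $\imath$ is an inclusion, and the $\lambda_i$ are nonzero). For the inverse, I would verify directly that $D'^{-1}\imath^T C^{-1}$ (what the statement writes as $D'^{-1}\imath^T C$, where I read $C$ as shorthand for the inverse of the conjugating factor, matching the convention $D=C^{-1}AB$) is a two-sided inverse: given $y=C\imath D'z\in A\Z^n$, the identity $\imath^T\imath=I_r$ yields
\[
D'^{-1}\imath^T C^{-1}(C\imath D'z) \;=\; D'^{-1}(\imath^T\imath)D'z \;=\; z,
\]
and conversely $C\imath D'\bigl(D'^{-1}\imath^T C^{-1}y\bigr)$ equals $y$ once one knows $y\in A\Z^n$ forces $\imath^T C^{-1}y$ to be divisible componentwise by $(\lambda_1,\ldots,\lambda_r)$.

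The only subtlety, and the place that needs most care to spell out, is well-definedness of the map in the $\Z^r$ direction: although $D'^{-1}$ has rational entries in general, for any $y=CDx\in A\Z^n$ we have $\imath^T C^{-1}y = \imath^T Dx = (\lambda_1 x_1,\ldots,\lambda_r x_r)\in D'\Z^r$, so $D'^{-1}\imath^T C^{-1}y=(x_1,\ldots,x_r)\in\Z^r$. Once this integrality is noted, both compositions reduce to identities and the two maps are mutual inverses, proving $A\Z^n\cong\Z^r$.
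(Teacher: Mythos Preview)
Your proof is correct and takes the same approach as the paper's: both exhibit $Q=C\imath D'$ and $P=D'^{-1}\imath^T C^{-1}$ as mutual inverses between $\Z^r$ and $A\Z^n$, though the paper compresses this into the two identities $QPA=A$ and $PQ=I_r$ without spelling out the integrality check that you (rightly) make explicit. You are also correct that the statement contains a typo---the paper's own proof writes $P:=D'^{-1}\imath^T C^{-1}$, confirming your reading.
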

\begin{proof}
 Let $P:=D'^{-1}\imath^TC^{-1}$ and $Q:=C \imath D'$. Then $P$ maps $A\Z^n$ into $\Z^r$, and $Q$ maps $\Z^r$ into $A\Z^n$.
 Since $QPA=A$ and $PQ=I$, the image $A\Z^n$ is isomorphic to $\Z^r$, with $P,Q$ being the isomorphisms.
\end{proof}
\section{Direct Limits}
In this section we consider direct limits of sequences of $\Z$-modules and $\Z$-linear maps.
Although such direct limits can be defined abstractly in terms of a universal property, we choose the concrete definition instead.
We should remark that a $\Z$-module can be considered as an Abelian group and viceversa.

Suppose
$$\xymatrix{Z_1\ar[r]^{A_{1}}&Z_2\ar[r]^{A_{2}}&Z_3\ar[r]^{A_{3}}&Z_4\ar[r]^{A^4}&\cdots}$$
is a sequence of $\Z$-modules $Z_i$ and $\Z$-linear maps $A_{i}:A_i\to A_{i+1}$. The\emph{ direct limit} of such sequence is defined as
$$\lim_{\rightarrow}(A_{i},Z_i):=\left(\bigoplus_{i\in\N} Z_i\right)/\sim,$$
where the equivalence relation $\sim$ is defined as follows:
Let $\imath_k:Z_k\to \bigoplus_{i\in\N} Z_i$ be the inclusion map $\imath_k(z_k):=(0,\ldots,0,z_k,0,\ldots)$, where $z_k$ is in the $k$-entry.
For $z_i\in Z_i$ and $z_j\in Z_j$ we say that $\imath_i(z_i)\sim \imath_j(z_j)$ if there is some integer $k\ge i,j$ such that $A_{i,k}z_i=A_{j,k}z_j$, where $A_{i,k}:=A_{k-1}\circ\cdots\circ A_{i}$ maps $Z_i$ into $Z_k$.
For instance $(z_1,z_2,0,\ldots)=(z_1,0,0,\ldots)+(0,z_2,0,\ldots)\sim(0,A_1z_1,0,\ldots)+(0,z_2,0,\ldots)=(0,A_1z_1+z_2,0,\ldots)$.
Recall that any element $\vx \in \bigoplus_{i\in\N} Z_i$ is a sequence $\vx=(x_i)_{i\in\N}$, where $x_i\ne 0$ for a finite number of them, i.e. a a finite number of entries in $\vx$ are different from zero.
It follows that for $\vx,\vy\in  \bigoplus_{i\in\N} Z_i $ we have $\vx\sim \vy$ if there is an integer $k\in\N$ such that $x_i=y_i=0$ for $i>k$ and
$$\sum_{i=1}^k A_{i,k}x_i=\sum_{i=1}^k A_{i,k}y_i.$$
In particular, $\vx\sim (0,\ldots,0,z_k,0,\ldots)$ where $z_k$ is the $k$-entry,  $z_k:=\sum_{i=1}^k A_{i,k}x_i$, and $z_i=0$ for $i>k$, so any element in the direct limit can be written as $[(0,\ldots,0,z_k,0,\ldots)]_\sim$, for some integer $k\in\N$ and $z_k\in Z_k$.
Observe that we have canonical $\Z$-linear maps $q_k:Z_k\to \lim \limits_{\rightarrow}(A_i,Z_i)$ given by $q_k(z_k):= [\iota_k(z_k)]_\sim$.\\

We are interested in finding the direct limit, denoted by $\lim\limits_{\rightarrow}(A,\Z^n)$, of the sequence
$$\xymatrix{\Z^n\ar[r]^{A}&\Z^n\ar[r]^{A}&\Z^n\ar[r]^{A}&\Z^n\ar[r]^{A}&\cdots},$$
where $A$ is an $n\times n$ integer matrix. Solving this problem is by no means a trivial task.
If $A$ is an $n\times n$ integer matrix with nonzero determinant then
$$\lim\limits_{\rightarrow}(A,\Z^n)\cong\bigcup_{k\in\N} A^{-k}\Z^n\subset (\Z[\tfrac 1{\det A}])^n,$$
which follows from the following commutative diagram
  $$\xymatrix{\Z^n\ar[r]^{A}\ar[d]_{I}&\Z^n\ar[r]^{A}\ar[d]_{A^{-1}}&\Z^n\ar[r]^{A}\ar[d]_{A^{-2}}&\Z^n\ar[r]^{A}\ar[d]_{A^{-3}}&\\
  \Z^n\ar[r]_{\imath_0}&A^{-1}\Z^n\ar[r]_{\imath_1}&A^{-2}\Z^n\ar[r]_{\imath_2}&A^{-3}\Z^n\ar[r]_{\imath_3}&,\\
 }$$
 where $\imath_i(\vx):=\vx$ is the inclusion map and $(\det A)^iA^{-i}\Z^n\subset\Z^n$, $\Z[\tfrac1d]:=\cup_{i\in\N_0}\frac1{d^i}\Z$.
If the determinant is zero then we can use Lemma \ref{l:AZntoZr} on the  integer matrix $A$ to obtain a smaller $m\times m$ integer matrix $A'$ with nonzero determinant and
$$\lim\limits_{\rightarrow}(A,\Z^n)\cong\lim\limits_{\rightarrow}(A,A\Z^n)\cong\lim\limits_{\rightarrow}(A',\Z^m).$$

The following lemma  shows how to $\Z$-conjugate some integer matrices to integer matrices whose first column is parallel with $\ve_1$.

\begin{lem}\label{l:generalizeddirectlimitGinvAG}
  Suppose $A$ is an $n\times n$ integer matrix. Suppose $\lambda$ is an eigenvalue of $A$ with corresponding integer eigenvector $\vx$.
  Let $\ve_1,\ldots,\ve_n$ be the columns of the $n\times n$ identity matrix $I$.
  Let $G:=[\vx,e_2,\ldots,e_n]$. If the first entry of $\vx$ is 1, then $G$ is $\Z$-invertible and $G^{-1}AG e_1= \lambda e_1$.
\end{lem}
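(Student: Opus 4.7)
The plan is to verify both claims by direct computation, exploiting the very special form of $G=[\vx,\ve_2,\ldots,\ve_n]$.

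First I would establish $\Z$-invertibility by computing $\det G$. Written out explicitly, $G$ has $\vx$ as its first column and the standard basis vectors $\ve_2,\ldots,\ve_n$ as the remaining columns, so $G$ is lower-triangular after the first column in the sense that for $j\ge 2$ the $j$-th column has a $1$ in row $j$ and zeros elsewhere. Cofactor expansion along the last row (and iterating, or equivalently expanding along the bottom-right $(n-1)\times(n-1)$ identity block) yields $\det G = x_1$, where $x_1$ is the first entry of $\vx$. By assumption $x_1=1$, hence $\det G = 1$ and therefore $G\in\mathrm{GL}_n(\Z)$.

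Next I would verify the eigenvalue identity. By the definition of $G$, its first column is $Ge_1=\vx$. Since $\vx$ is an eigenvector of $A$ with eigenvalue $\lambda$, this gives $AGe_1 = A\vx = \lambda\vx = \lambda Ge_1$. Applying $G^{-1}$ on the left (which is legitimate thanks to the previous step) produces $G^{-1}AGe_1 = \lambda G^{-1}Ge_1 = \lambda e_1$, which is the desired identity.

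There really is no main obstacle here: the lemma is a packaging statement whose content is that requiring the first coordinate of the eigenvector to equal $1$ is exactly what makes the naive change-of-basis matrix integrally invertible. The only thing one could conceivably get wrong is the determinant computation, so I would be careful to note explicitly that expanding along rows $2,\ldots,n$ strips off the identity block and leaves only the scalar $x_1$.
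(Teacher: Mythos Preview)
Your proof is correct and follows essentially the same approach as the paper: the paper simply notes that $G=[\vx-\ve_1,0,\ldots,0]+I$ is lower triangular with $1$'s on the diagonal (a slightly slicker way of getting $\det G=1$ than your cofactor expansion), and then, having written $G^{-1}$ explicitly, computes $G^{-1}AGe_1=G^{-1}A\vx=\lambda G^{-1}\vx=\lambda e_1$ just as you do.
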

\begin{proof}
  Since $G=[\vx,e_2,\ldots,e_n]=[\vx-\ve_1,0,\ldots,0]+I$ is a lower triangular matrix with integer entries and ones on the diagonal, its determinant is $1$ hence $\Z$-invertible.
  Moreover $G^{-1}=[2\ve_1-\vx,\ve_2,\ldots,\ve_n]=[\ve_1-\vx,0,\ldots,0]+I.$
  Then, $G^{-1}AG \ve_1=G^{-1}A\vx=\lambda G^{-1}\vx=\lambda(\ve_1-\vx+\vx)=\lambda \ve_1$.
\end{proof}

If the same lemma \ref{l:generalizeddirectlimitGinvAG} holds for the lower right $(n-1)\times (n-1)$ block of $G^{-1}AG$, then $A$ is $\Z$-conjugate to an integer matrix that has zeroes below the diagonal on the first and second column. In this way we can continue putting zeroes below the diagonal until the above lemma \ref{l:generalizeddirectlimitGinvAG} no longer holds or until we get an upper triangular matrix.

\begin{lem}\label{l:directlimitGinvAG}
  Suppose $U$ is an integer $n\times n$ upper triangular matrix with nonzero diagonal entries $\lambda_1,\ldots,\lambda_n$.
  Let $D:=\mathrm{diag}(\lambda_1,\ldots,\lambda_n)$ be the invertible diagonal matrix with same diagonal entries as $U$.\\
  (a) If $V_k:=U^kD^{-k}$ is an integer matrix for all $k\in \N$  then
  $$\lim\limits_{\rightarrow} (U,\Z^n)\cong\lim\limits_{\rightarrow} (D,\Z^n)]\cong\Z[\tfrac1{\lambda_1}]\oplus\cdots\oplus\Z[\tfrac1{\lambda_n}].$$
  (b) If  $\lambda_2=\cdots=\lambda_n=1$ then
  $$\lim\limits_{\rightarrow} (U,\Z^n)\cong\lim\limits_{\rightarrow} (D,\Z^n)\cong\Z[\tfrac1{\lambda_1}]\oplus\Z^{n-1}.$$
\end{lem}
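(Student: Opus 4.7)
The plan is to convert both direct limits to ascending unions of lattices in $\Q^n$ via the ladder diagram that appears just above the lemma, and then to show those unions are level-by-level equal.

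First, because $U$ is upper triangular one has $\det U=\lambda_1\cdots\lambda_n\ne0$, and likewise $\det D\ne 0$, so the cited diagram identifies
$$\lim_{\rightarrow}(U,\Z^n)\cong\bigcup_{k\in\N}U^{-k}\Z^n,\qquad\lim_{\rightarrow}(D,\Z^n)\cong\bigcup_{k\in\N}D^{-k}\Z^n.$$
The second union is immediate from $D^{-k}\Z^n=\lambda_1^{-k}\Z\oplus\cdots\oplus\lambda_n^{-k}\Z$, giving
$$\bigcup_{k\in\N}D^{-k}\Z^n=\Z[\tfrac{1}{\lambda_1}]\oplus\cdots\oplus\Z[\tfrac{1}{\lambda_n}],$$
which pins down the explicit right-hand side in (a).

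The content of (a) is then the equality $U^{-k}\Z^n=D^{-k}\Z^n$ for every $k$. By hypothesis $V_k=U^kD^{-k}$ has integer entries; the crucial observation is that $V_k$ is upper triangular (a product of an upper triangular matrix and a diagonal one) with diagonal entries $\lambda_i^k/\lambda_i^k=1$. Hence $\det V_k=1$, and therefore $V_k^{-1}=D^kU^{-k}$ is likewise integer. Writing $D^{-k}=U^{-k}V_k$ and $U^{-k}=D^{-k}V_k^{-1}$ then yields both inclusions, hence equality, and (a) follows.

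For part (b) the plan is to verify that the integrality hypothesis of (a) is automatic when $\lambda_2=\cdots=\lambda_n=1$, and then simply to invoke (a). Since $(V_k)_{ij}=(U^k)_{ij}/\lambda_j^k$, for $j\ge 2$ the denominator is $1$ so the entry is integer; for $j=1$, $i\ge 2$ the numerator vanishes by upper triangularity; and for $i=j=1$ the entry equals $1$. The explicit form $\Z[\tfrac{1}{\lambda_1}]\oplus\Z^{n-1}$ then drops out of the description of $\bigcup_k D^{-k}\Z^n$ given above.

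I expect the one subtle step to be the reverse inclusion $U^{-k}\Z^n\subseteq D^{-k}\Z^n$: it cannot be concluded from $V_k\in M_n(\Z)$ alone, but requires the observation that upper triangularity together with unit diagonal forces $\det V_k=1$ and hence integrality of $V_k^{-1}$.
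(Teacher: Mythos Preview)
Your argument is correct. The crucial observation --- that $V_k=U^kD^{-k}$ is upper triangular with unit diagonal, hence $\det V_k=1$ and $V_k\in GL_n(\Z)$ --- is exactly what the paper uses. The difference lies only in how the isomorphism of direct limits is packaged. The paper builds a single commutative ladder
\[
\xymatrix{\Z^n\ar[r]^{D}\ar[d]_{I}&\Z^n\ar[r]^{D}\ar[d]_{V_1}&\Z^n\ar[r]^{D}\ar[d]_{V_2}&\cdots\\
\Z^n\ar[r]^{U}&\Z^n\ar[r]^{U}&\Z^n\ar[r]^{U}&\cdots}
\]
with $\Z$-invertible vertical maps $V_k$, and reads off the isomorphism $\lim_\to(D,\Z^n)\cong\lim_\to(U,\Z^n)$ directly. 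You instead first invoke the earlier identification of each limit with an ascending union of lattices $\bigcup_k A^{-k}\Z^n$ and then prove the level-by-level equality $U^{-k}\Z^n=D^{-k}\Z^n$ from the integrality of $V_k$ and $V_k^{-1}$. Your route is more concrete and reuses the preceding result in the paper; the paper's route is slightly shorter and avoids passing through the union description. For part~(b) your verification of integrality is the same as the paper's (you are in fact a touch more explicit about the $j=1$ column).
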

\begin{proof}
   Notice that matrix $V_k$ is upper triangular as matrices $U$ and $D$ are.
  The $ij$ entry of matrix $V_k$ is $(V_k)_{ij}=\sum_{m=1}^n (U^k)_{im}\frac{\delta_{mj}}{\lambda_j^k}=(U^k)_{ij}\frac{\delta_{ij}}{\lambda_j^k}$.
  Since the diagonal entries in $U^k$ are precisely $\lambda_j^k$, $j=1,\ldots,n$, the diagonal entries of $V_k$ are $(V_k)_{jj}=1$.
  Since $V_k$ is assumed to be an integer matrix and since it is upper triangular its determinant 1, hence $\Z$-invertible.
  Since the following diagram commutes
  $$\xymatrix{\Z^n\ar[r]^{D}\ar[d]_{I}&\Z^n\ar[r]^{D}\ar[d]_{V_1}&\Z^n\ar[r]^{D}\ar[d]_{V_2}&\Z^n\ar[r]^{D}\ar[d]_{V_3}&\\
  \Z^n\ar[r]^{U}&\Z^n\ar[r]^{U}&\Z^n\ar[r]^{U}&\Z^n\ar[r]^{U}&\\}$$
  and since  $V_k$ is $\Z$-invertible, the first part of the lemma follows.
  In the case when $\lambda_2=\cdots=\lambda_n=1$, the $ij$ entry of $V_k$ is $(V_k)_{ij}=(U^k)_{ij}\delta_{ij}$ for $j=2,\ldots,n$, and $(V_k)_{jj}=1$. Hence $V_k$ is an integer matrix, and by the first part of the lemma, the second part follows.
\end{proof}
For later use in our determination of a cohomology group, we apply the previous remarks to the following example.

\begin{exam}\label{ex:fivebyfivemainmatrix}
  Defining the following matrices
  $$A:=\left(
\begin{array}{ccccc}
 2 & 2 & 2 & 1 & 1 \\
 0 & 1 & 0 & 0 & 0 \\
 1 & 2 & 3 & 1 & 1 \\
 1 & 2 & 2 & 2 & 1 \\
 1 & 2 & 2 & 1 & 2 \\
\end{array}
\right)
\qquad
G:=\left(
\begin{array}{ccccc}
 1 & 0 & 0 & 0 & 0 \\
 0 & 1 & 0 & 0 & 0 \\
 1 & 0 & 1 & 0 & 0 \\
 1 & 0 & 0 & 1 & 0 \\
 1 & 0 & 0 & 0 & 1 \\
\end{array}
\right),
$$
where we used Lemma \ref{l:generalizeddirectlimitGinvAG} to construct the above matrix $G$, we get
$$G^{-1}AG=\left(
\begin{array}{ccccc}
 6 & 2 & 2 & 1 & 1 \\
 0 & 1 & 0 & 0 & 0 \\
 0 & 0 & 1 & 0 & 0 \\
 0 & 0 & 0 & 1 & 0 \\
 0 & 0 & 0 & 0 & 1 \\
\end{array}
\right),$$
 which is an upper triangular matrix. By the second Lemma \ref{l:directlimitGinvAG} we have $$\lim\limits_{\rightarrow}(A,\Z)\cong\lim\limits_{\rightarrow}(\mathrm{diag}(6,1,1,1,1),\Z)\cong\Z[\tfrac 16]\oplus\Z^4.$$
\end{exam}
\section{Computing the cohomology of the continuous space $\Omega$}
\subsection{Cohomology of the continuous space $\Omega$}\label{subs:cohomologyofOmega}
\begin{figure}
  \includegraphics[scale=.30]{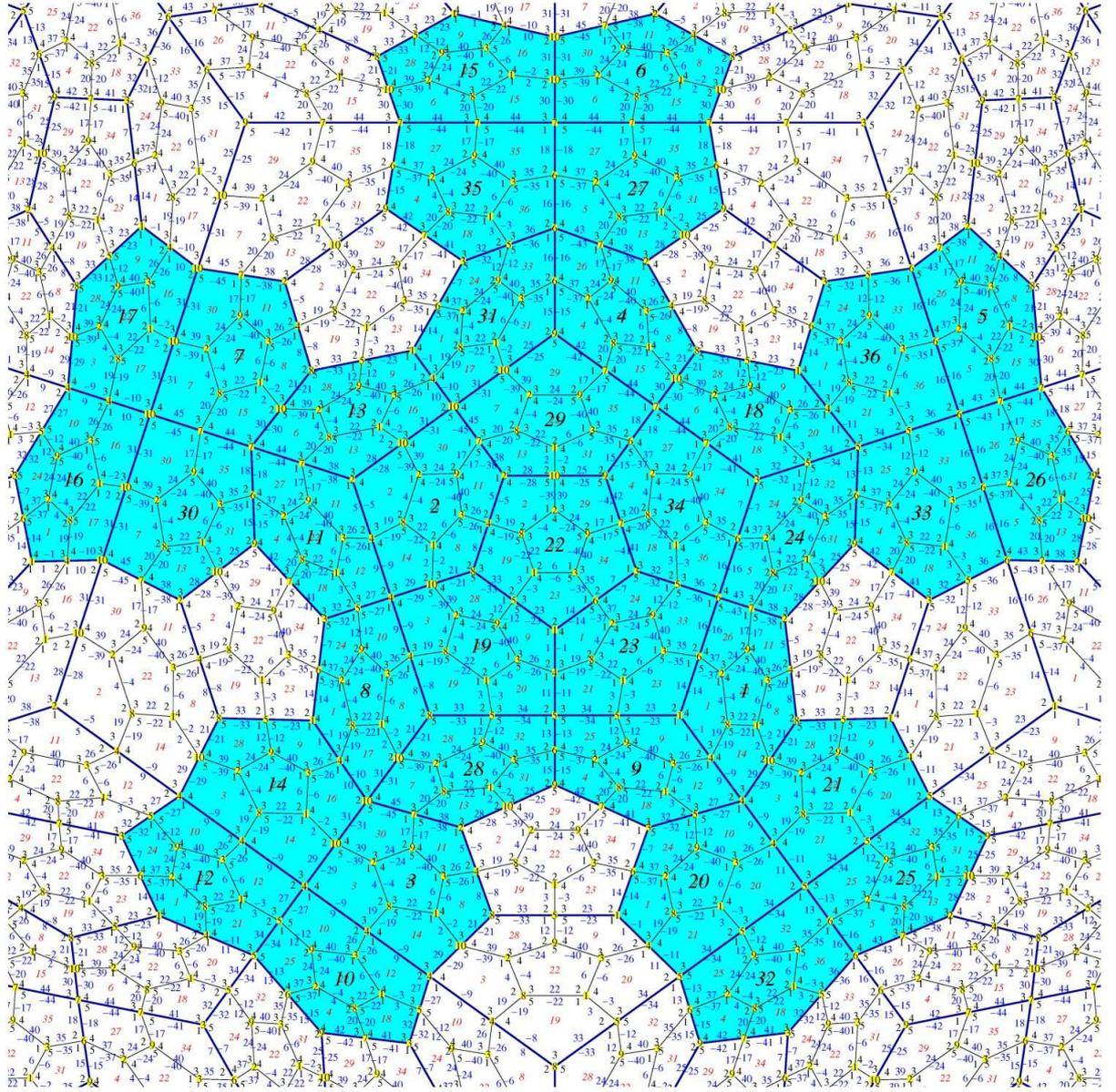}\\
  \caption{Collared tiles, edges, vertices, subdivision of them, and decoration. See \cite{MRSinverselimit} for explanation.}\label{f:decoratedtileseverything}
\end{figure}
In \cite{MRSinverselimit} we constructed a collared substitution $\omega:\Omega\to\Omega$ on the continuous hull $\Omega$, which is a homeomorphism.
Each collared tile is a regular pentagon of side-length 1 plus the degree of its vertices and inside and outside decoration of the pentagon.
There are 36 collared tiles, 45 collared edges and 10 collared vertices. Figure \ref{f:decoratedtileseverything} shows them and their collared substitution.
In the same article \cite{MRSinverselimit} we showed that $\Omega$ induces a finite CW-complex $\Gamma$, which we call the Anderson-Putnam finite CW-complex, where the closed 2-cells, closed 1-cells, and 0-cells are precisely the collared tiles, collared edges and collared vertices.
Moreover, we saw that the map $\omega$ induces a continuous surjective map $\gamma:\Gamma\to\Gamma$, and that $\Omega$ is homeomorphic to the inverse limit $\Omega_1:=\lim\limits_{\leftarrow}(\Gamma,\gamma)$. By the proof of Theorem 6.1 in \cite{PutnamBible95}, the Cech cohomology with integer coefficients $H^i(\Omega,\Z)$  is isomorphic to the direct limit of the system of Abelian groups
\begin{equation*}
\xymatrix{H^i(\Gamma,\Z)\ar[r]^{H^i(\gamma^\bullet)}&H^i(\Gamma,\Z)\ar[r]^{H^i(\gamma^\bullet)}&H^i(\Gamma,\Z)\ar[r]\ar[r]^{\quad H^i(\gamma^\bullet)}&\cdots},
\end{equation*}
for $i=0,1,2,\ldots$. That is, for fixed $i\in\N_0$
\begin{equation}\label{e:cohomologyasdirlim}
 H^i(\Omega,\Z)\cong \lim_{\to}(H^i(\Gamma,\Z),H^i(\gamma^\bullet)).
\end{equation}
Moreover, by Theorem 6.3 in \cite{PutnamBible95}, the K-theory of the continuous hull $\Omega$ is
\begin{eqnarray}\label{e:ktheory}
 && K^0(\Omega)\cong H^0(\Omega,\Z)\oplus H^2(\Omega,\Z)\\
\nonumber && K^1(\Omega)\cong H^1(\Omega,\Z).
\end{eqnarray}

The cochain map $\gamma^\bullet:=\{\gamma_n:C^n\to C^n\}_{n\in\N_0}$ is given as follows.
By the degree of maps, the induced map $\gamma_2$ maps each collared tile $t$ to the sum of the six collared tiles of $\omega(t)$, as the collared tiles are all clockwise oriented. Similarly, by degree of maps, the induced map $\gamma_1$ maps each collared edge to the sum of two collared edges (with $\pm$ to take care of orientation) as a collared edge divides into two collared edges. Also, the induced map $\gamma_0$ maps each collared vertex to itself as a collared vertex remains still when the subdivision of a collared tile is performed. Hence $\gamma_0$ is the identity map. For $n>2$, $\gamma_n=0$.

The cellular chain complex $C_\bullet$ is given as follows.
The collared tiles shown in Figure \ref{f:decoratedtileseverything} (and listed in Table 1 in \cite{MRSinverselimit}) are an ordered basis of $C_2$ if we provide them with the ordering of the list.
Then $C_2\cong \Z^{36}$ with collared tile $t_i$ being mapped to $e_i$, where $\{e_i\}$ is the standard basis of $\Z^{36}$.
Similarly, the collared edges and vertices shown in Figure \ref{f:decoratedtileseverything} (and listed in Table 2 and Table 3 in \cite{MRSinverselimit}, respectively), are an ordered basis of $C_1$ and $C_0$, respectively, and thus $C_1\cong \Z^{45}$ and $C_0\cong\Z^{10}$.

The cellular cochain complex $C^\bullet$ in terms of matrices is given as follows.
Let $A^i:=A_{i+1}^T$ be the matrix of the boundary maps $d^i$. The cochain complex of $\Gamma$ is
$$\xymatrix{0&\Z^{36}\ar[l]&\Z^{45}\ar[l]_{A_2^T}^{A^1}&\Z^{10}\ar[l]_{A_1^T}^{A^0}&0\ar[l]}.$$
Since $d_2:C_2\to C_1$ sends each collared tile to the sum of its boundary collared edges (with $\pm$ on each collared edge to preserve orientation), each row in the matrix  $A^1:=A_2^T$ represents a collared tile, and each entry on this row represents a collared edge of the boundary of the collared tile (with $\pm$ to indicate orientation).
Similarly, since $d_1:C_1\to C_0$ sends each collared edge to the difference of its collared vertices, each row in the matrix $A^0:=A_1^T$ represents a collared edge, and each entry on this row represents a collared vertex of the boundary of the collared edge (with $\pm$ to indicate orientation).
Let $B_1,B_2$ be the matrices for the inflation maps $\gamma_1$, $\gamma_2$ respectively.
These matrices are shown next and can be read from Figure \ref{f:decoratedtileseverything}.   \\
\begin{center}
  \includegraphics[scale=.55]{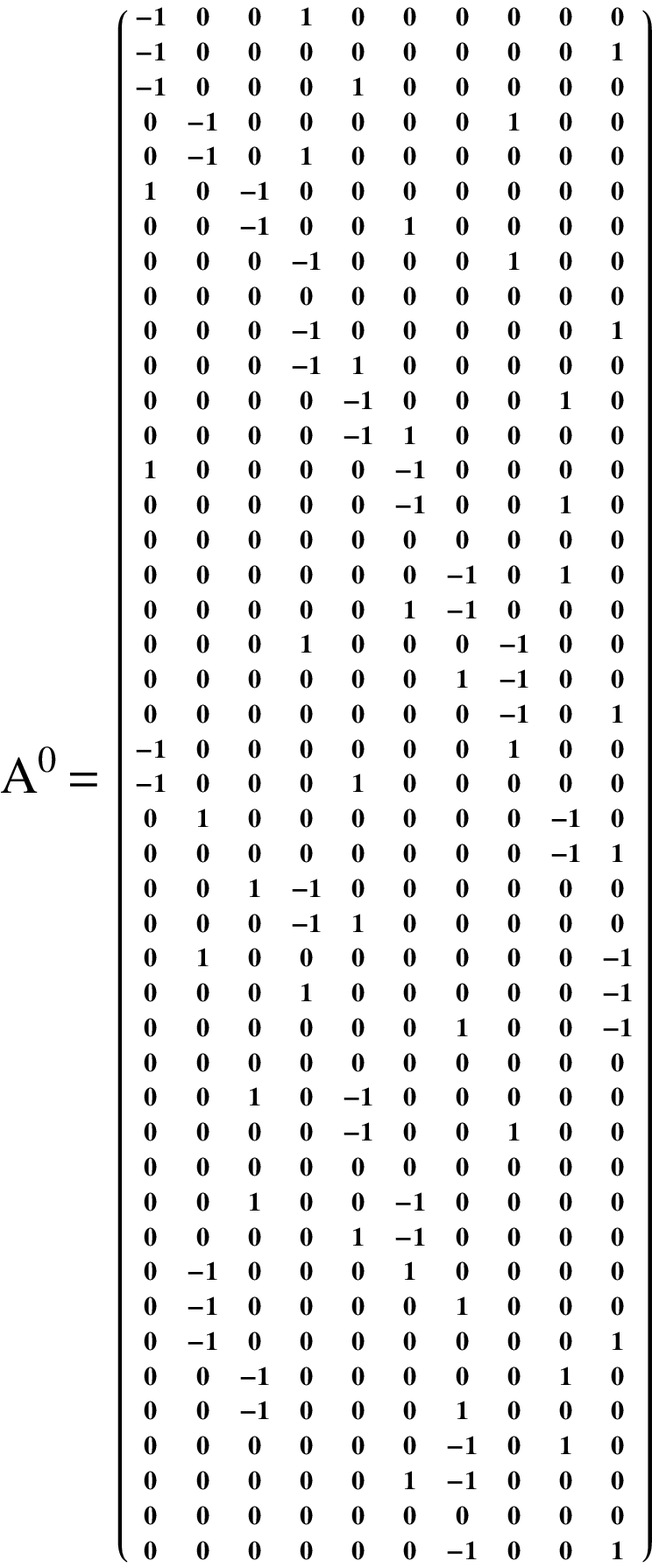}
\end{center}

\begin{center}
  \includegraphics[scale=.53]{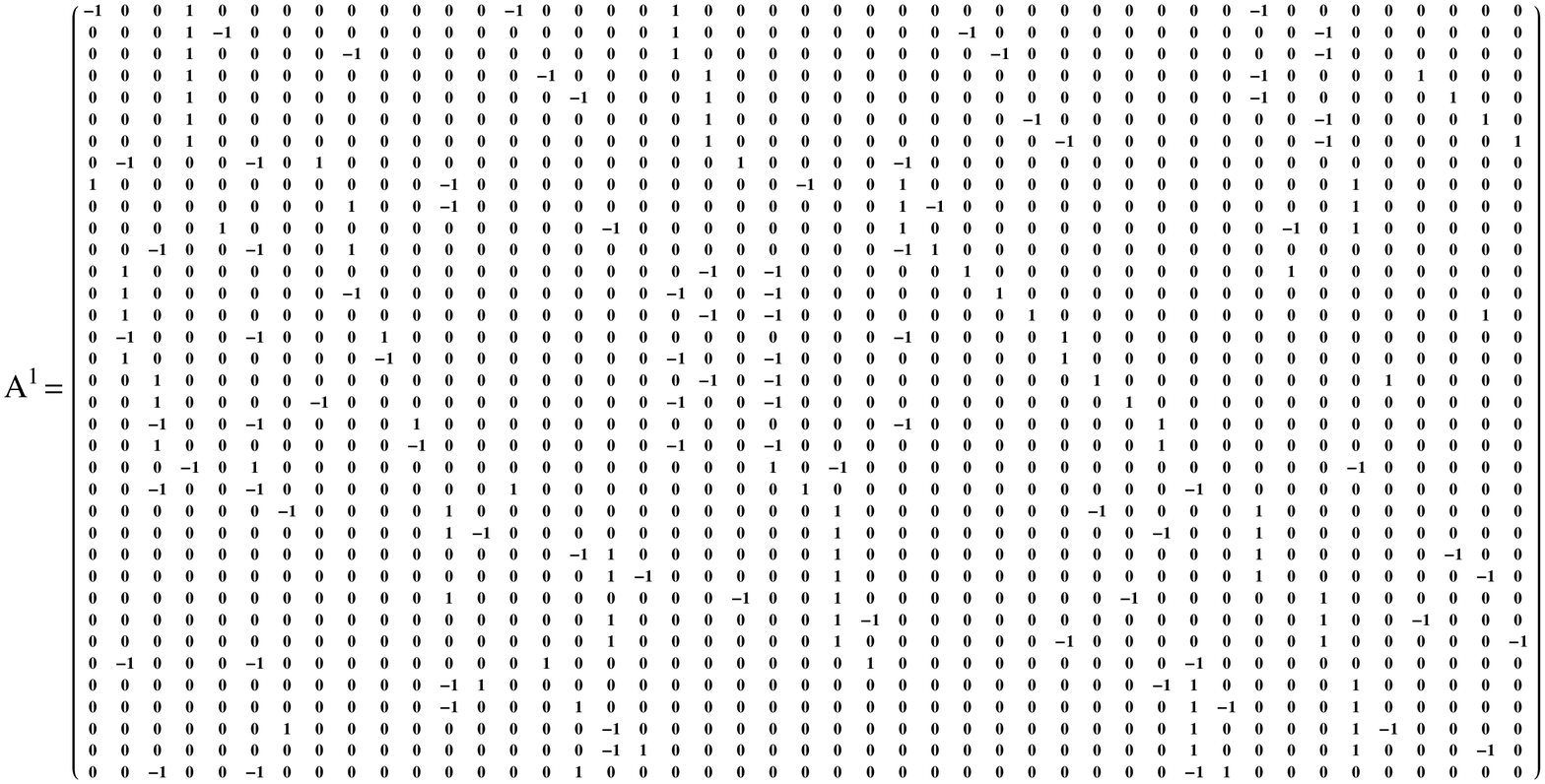}
\end{center}

\begin{center}
  \includegraphics[scale=.55]{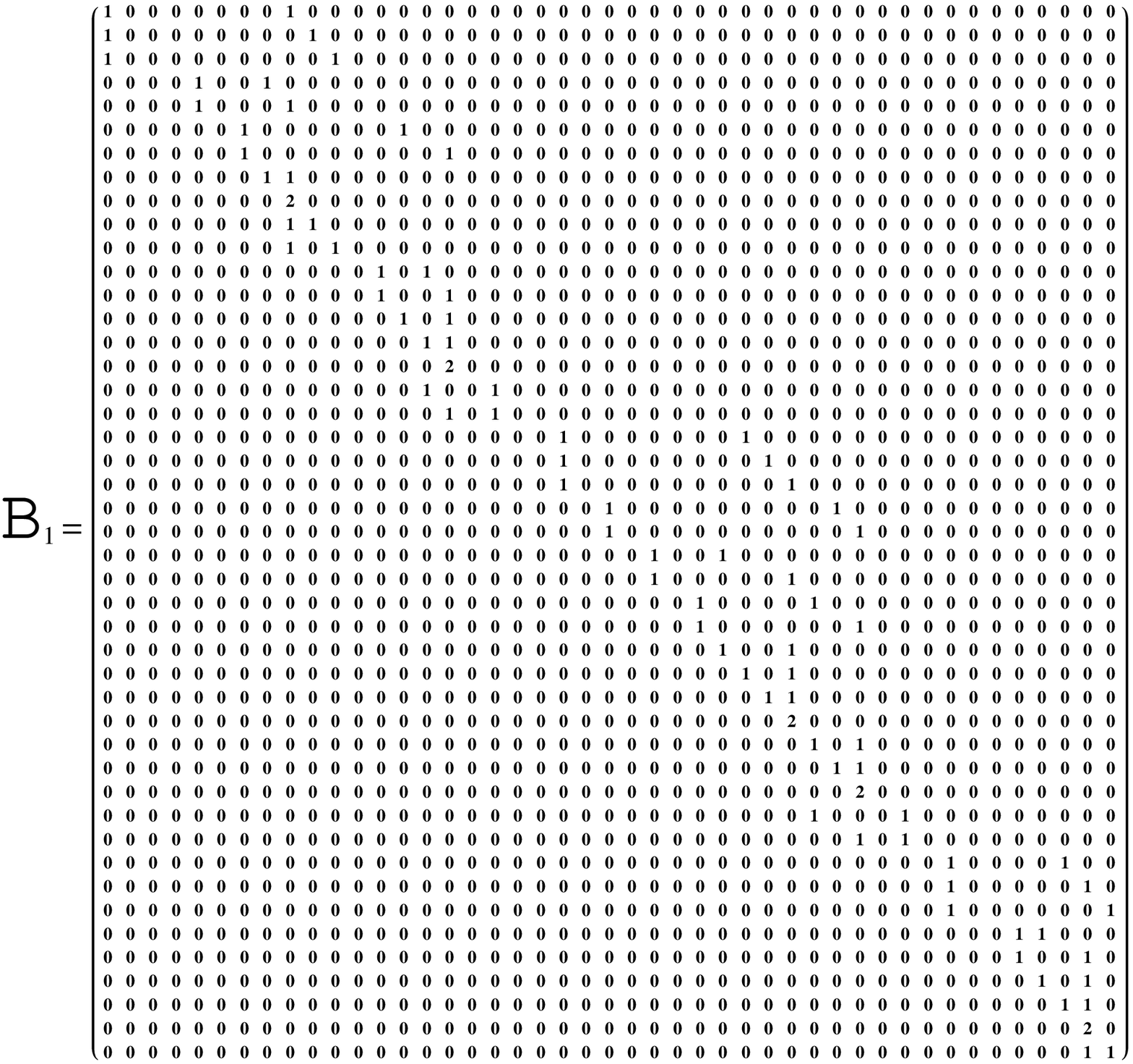}
\end{center}

\begin{center}
  \includegraphics[scale=.55]{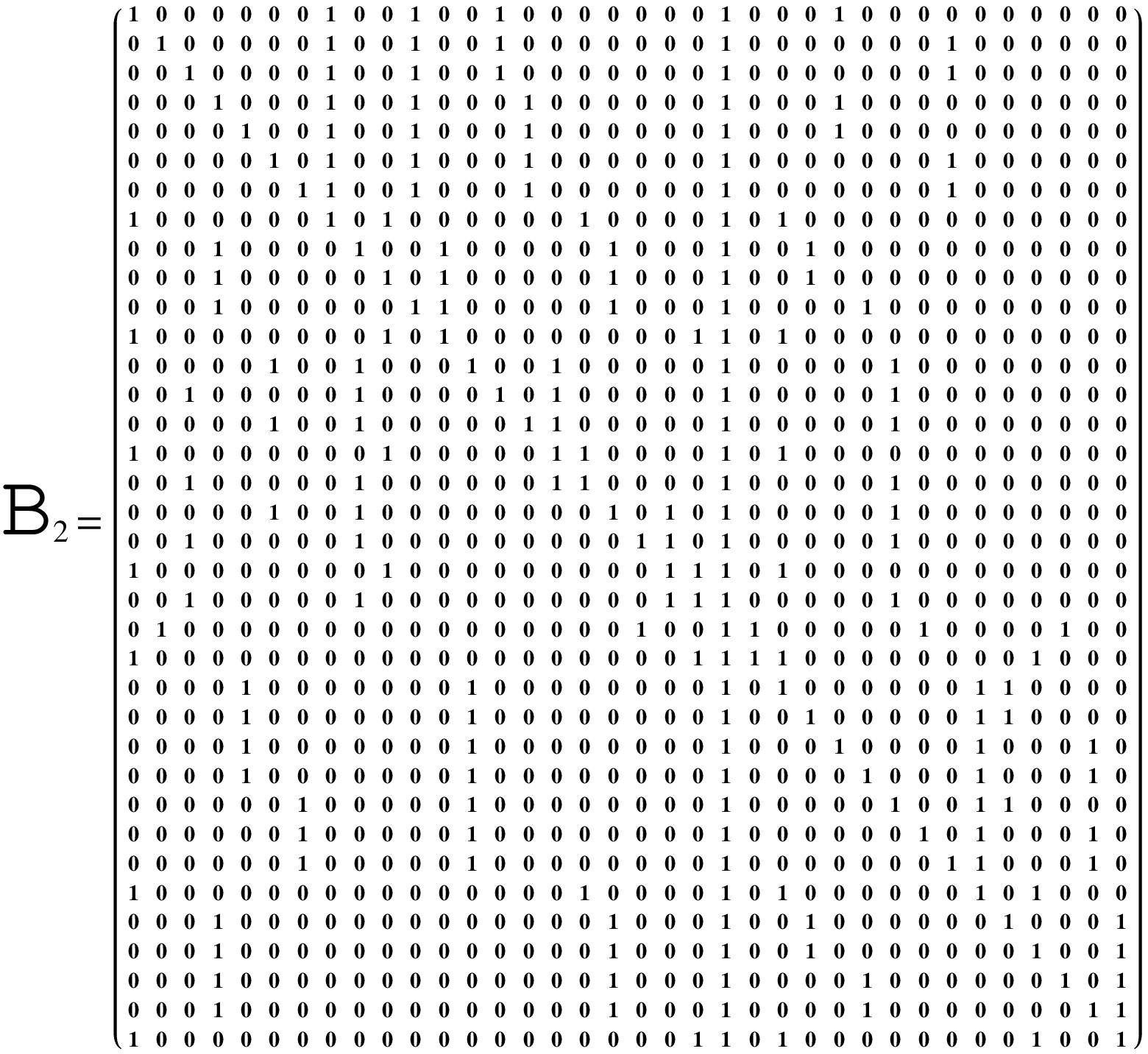}
\end{center}

The matrix $(B_2)^2$ is a positive matrix as every entry is at least 2. Hence  matrix $B_2$ is primitive, and the collared substitution $\omega:\Omega\to\Omega$ is  primitive.

Since $B_1A^0=A^0$ and $B_2A^1=A^1B_1$ the diagram in Equation (\ref{eq:gammadot}) commutes, as expected.
Since $\ker A^0\cong \Z$, the zero cohomology group $H^0(\Gamma)=\Z$.
Since $\gamma_0$ is the identity map, $H^0(\gamma^\bullet)$ is also the identity map, and therefore the zero-cohomology group of the continuous hull is
$$H^0(\Omega)=\lim\limits_{\rightarrow}(\Z,id)=\Z.$$

By Lemma \ref{l:cohomology1}, $H^1(\Gamma)=\frac{\ker A^1}{\im A^0}\cong\frac{\Z^{14}}{J^TQ^{-1}A^0\Z^{10}}\cong(\Z_1)^9\oplus\Z_0\oplus\Z^{4}\cong \Z^5$.
Using the same lemma, the map $H^1(\gamma^\bullet):H^1(\Gamma)\to H^1(\Gamma)$ is isomorphic to the map $(\Z_1)^9\oplus\Z_0\oplus\Z^{4}\to (\Z_1)^9\oplus\Z_0\oplus\Z^{4}$ given by
$$([x_1]_1,\ldots,[x_9]_1,[x_{10}]_0,x_{11},\ldots,x_{14})\mapsto ([y_1]_1,\ldots,[y_9]_1,[y_{10}]_0,y_{11},\ldots,y_{14}),$$
where $y=\tilde P J^T Q^{-1}\gamma_1 Q J\tilde P^{-1} x$.
This map turns out to be the zero map. Hence $H^1(\gamma^\bullet)=0$, and so the 1-cohomology group of $\Omega$ is
$$H^1(\Omega)=0.$$

By Lemma \ref{l:secondcohomology}, $H^2(\Gamma)=\text{Coker } d^1= \frac{\Z^{36}}{A^1\Z^{45}}\cong(\Z_2)^5\oplus \Z^{5}$.
Using the same lemma, the map $H^2(\gamma^\bullet):H^2(\Gamma)\to H^2(\Gamma)$ is isomorphic to the map
$(\Z_1)^{26}\oplus (\Z_2)^5\oplus\Z^{5}\to(\Z_1)^{26}\oplus (\Z_2)^5\oplus \Z^{5}$ given by
$x\mapsto P\gamma_2P^{-1}x$. Since $\Z_1=0$, this map is isomorphic to
$(\Z_2)^5\oplus\Z^{5}\to (\Z_2)^5\oplus \Z^{5}$ given by
$x\mapsto B_2'x$, where $B_2'$ is the lower right $10\times 10$ block of $P\gamma_2P^{-1}$. So
\begin{eqnarray*}
B_2':=\left(
\begin{array}{cccccccccc}
 0 & 0 & 0 & 0 & 0 & 0 & 0 & 0 & 0 & 0 \\
 0 & 0 & 0 & 0 & 0 & 1 & 1 & 1 & 0 & 0 \\
 0 & 0 & 0 & 0 & 0 & 1 & 1 & 1 & 0 & 0 \\
 0 & 0 & 0 & 0 & 0 & 0 & 1 & 1 & 1 & 0 \\
 0 & 0 & 0 & 0 & 0 & 0 & 0 & 0 & 0 & 0 \\
 0 & 0 & 0 & 0 & 0 & 2 & 2 & 2 & 1 & 1 \\
 0 & 0 & 0 & 0 & 0 & 0 & 1 & 0 & 0 & 0 \\
 0 & 0 & 0 & 0 & 0 & 1 & 2 & 3 & 1 & 1 \\
 0 & 0 & 0 & 0 & 0 & 1 & 2 & 2 & 2 & 1 \\
 0 & 0 & 0 & 0 & 0 & 1 & 2 & 2 & 1 & 2 \\
\end{array}
\right)=\left(
\begin{array}{cc}
 0 & M_1 \\
 0 & M_2 \\
\end{array}
\right),
\end{eqnarray*}
where $M_1$ is the upper right $5\times 5$ matrix and $M_2$ is the lower right $5\times 5$ matrix of $B_2'$.

\begin{lem}
The direct limit of
$$\xymatrix{&(\Z_2)^5\oplus \Z^{5}\ar[r]^{B_2'}&(\Z_2)^5\oplus \Z^{5}\ar[r]^{B_2'}&(\Z_2)^5\oplus \Z^{5}\ar[r]&}$$
is $\Z[\tfrac 16] \oplus\Z^4$.
\end{lem}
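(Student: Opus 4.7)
The key observation is that the first five columns of $B_2'$ vanish identically. This ensures that $B_2'$ defines a well-defined endomorphism of the group $(\Z_2)^5 \oplus \Z^5$: vanishing of the top-left $5\times 5$ block means the $\Z_2^5$-input is ignored, and vanishing of the bottom-left $5\times 5$ block means no $\Z_2^5$-valued input is illegally promoted to a $\Z^5$-valued output. Granting this, $B_2'$ acts on pairs $(x,y)$ with $x\in\Z_2^5$ and $y\in\Z^5$ by
$$B_2'(x,y) = (M_1 y \bmod 2,\, M_2 y),$$
so the $\Z_2^5$-component is destroyed after a single application, and all subsequent dynamics are governed by $M_2$. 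Crucially, $M_2$ is precisely the $5\times 5$ matrix $A$ analyzed in Example \ref{ex:fivebyfivemainmatrix}, for which $\lim\limits_\to(A,\Z^5)\cong \Z[\tfrac{1}{6}]\oplus\Z^4$. My plan is to reduce the problem to that example by showing that the projection $\pi(x,y) := y$ induces an isomorphism between the two direct limits.

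Since $\pi\circ B_2' = M_2\circ\pi$, the map $\pi$ is a morphism of direct systems and so induces a homomorphism $\Pi$ between the direct limits. Surjectivity of $\Pi$ is immediate: any representative $y\in\Z^5$ at level $k$ is the image of $(0,y)$ at the same level. For injectivity, suppose $[(x,y)]_k\in\ker\Pi$; then $[y]_k=0$ in $\lim\limits_\to(M_2,\Z^5)$, so $M_2^\ell y=0$ for some $\ell\geq 0$. A short induction shows that the second coordinate of $(B_2')^\ell(x,y)$ always equals $M_2^\ell y$, so $(B_2')^\ell(x,y) = (z,0)$ for some $z\in\Z_2^5$, and consequently
$$(B_2')^{\ell+1}(x,y) = B_2'(z,0) = (0,0).$$
Thus $[(x,y)]_k$ already represents the zero class, so $\Pi$ is an isomorphism and the direct limit is $\Z[\tfrac{1}{6}]\oplus\Z^4$, as claimed.

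The only step requiring genuine care is the well-definedness of $B_2'$ as an endomorphism of the torsion-containing group $(\Z_2)^5 \oplus \Z^5$, which is exactly the column-vanishing observation with which the argument opens. Everything afterwards is formal: the commuting square $\pi\circ B_2' = M_2\circ\pi$ is a routine check, the surjectivity lift $(0,y)$ is obvious, and the injectivity argument is a two-line diagram chase relying on the fact that $B_2'$ annihilates anything with zero second coordinate. Once these ingredients are assembled, the previously worked Example \ref{ex:fivebyfivemainmatrix} delivers the conclusion.
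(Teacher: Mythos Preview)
Your proof is correct and follows essentially the same route as the paper: both reduce to the direct limit $\lim\limits_{\to}(M_2,\Z^5)$ computed in Example~\ref{ex:fivebyfivemainmatrix} via the projection $\pi_2(x,y)=y$. The only cosmetic difference is that the paper packages the isomorphism of direct limits into a single commuting zigzag diagram with the section $M=\begin{pmatrix}M_1\\M_2\end{pmatrix}:\Z^5\to(\Z_2)^5\oplus\Z^5$ (so that $M\circ\pi_2=B_2'$ and $\pi_2\circ M=M_2$), whereas you verify injectivity and surjectivity of the induced map by hand; your argument is slightly more explicit but amounts to the same thing.
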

 \begin{proof}
Defining $M:=\left(
\begin{array}{c}
 M_1 \\
  M_2 \\
\end{array}
\right),$ and $\pi_2$ the projection map $(z_1,z_2)\mapsto z_2$, the following diagram commutes
   $$\xymatrix{(\Z_2)^5\oplus \Z^{5}\ar[r]^{B_2'}\ar[d]_{\pi_2}&(\Z_2)^5\oplus \Z^{5}\ar[r]^{B_2'}\ar[d]_{\pi_2}&(\Z_2)^5\oplus \Z^{5}\ar[r]^{B_2'}\ar[d]_{\pi_2}&(\Z_2)^5\oplus \Z^{5}\ar[r]^{\,\,\,\,\,\,\,\,\,B_2'}\ar[d]_{\pi_2}&\\
   \Z^{5}\ar[r]_{M_2}\ar[ru]_{\!\!\!M}& \Z^{5}\ar[r]_{M_2}\ar[ru]_{\!\!\!M}& \Z^{5}\ar[r]_{M_2}\ar[ru]_{\!\!\!M}& \Z^{5}\ar[r]_{M_2}&.\\
 }$$
The direct limit of the bottom row was computed in Example \ref{ex:fivebyfivemainmatrix}, hence the lemma.
 \end{proof}

Hence the cohomology of the hull $\Omega$ is
$H^2(\Omega)=\Z[\frac16] \oplus \Z^4$. In summary,

\begin{thm}\label{t:cohomologyofOmega}
The cohomology of the continuous hull $\Omega$ is
\begin{eqnarray*}
  &&H^0(\Omega)=\Z\\
\nonumber  &&H^1(\Omega)=0\\
\nonumber  &&H^2(\Omega)=\Z[\tfrac16] \oplus \Z^4.
\end{eqnarray*}

\end{thm}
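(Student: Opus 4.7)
The plan is dictated by equation~(\ref{e:cohomologyasdirlim}): for each $i\in\{0,1,2\}$, the group $H^i(\Omega,\Z)$ is the direct limit of the system $(H^i(\Gamma,\Z),\,H^i(\gamma^\bullet))$, so the work splits into (a) computing $H^i(\Gamma)$ from the cellular cochain complex built out of $A^0$ and $A^1$, (b) expressing the endomorphism induced by $\gamma$ in the coordinates produced in step~(a) using the matrices $B_1,B_2$, and (c) evaluating the direct limit with the techniques from Section~3. The degree~$0$ case is almost free: since $d_1$ sends each edge to the difference of its endpoints and the $1$-skeleton of $\Gamma$ is connected, $\ker A^0 = H^0(\Gamma)\cong\Z$; because every collared vertex is fixed by the substitution, $\gamma_0$ is the identity, so the direct limit is $\Z$.

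For $H^1$ I apply Lemma~\ref{l:cohomology1} to $A^1$. A Smith normal form computation of $A^1$ produces invertible $P,Q$ and a rank $r$, and the lemma then identifies $\ker A^1/\mathrm{Im}\,A^0$ with $\mathrm{coker}(J^T Q^{-1} A^0)$. A second Smith computation on this $V\times(E-r)$ matrix yields diagonal entries that, after discarding the $\Z/1$-summands, give $H^1(\Gamma)\cong(\Z/1)^9\oplus(\Z/0)\oplus\Z^4\cong\Z^5$. In these coordinates $H^1(\gamma^\bullet)$ is represented by $\tilde P\,J^T Q^{-1} B_1\,Q J\tilde P^{-1}$; a direct matrix calculation shows this $5\times5$ matrix is identically zero, so every map in the direct system is zero and $H^1(\Omega)=0$.

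For $H^2$ I apply Lemma~\ref{l:secondcohomology} to $A^1$. After the trivial $\Z/1$-summands disappear, the lemma gives $H^2(\Gamma)\cong(\Z/2)^5\oplus\Z^5$, and conjugating $\gamma_2$ by $P$ exhibits $H^2(\gamma^\bullet)$ as multiplication by the $10\times 10$ matrix $B_2'$ displayed earlier. Its block shape (zero left columns, $M_1$ and $M_2$ on the right) gives a commutative diagram, shown in the preceding lemma, which identifies the direct system $(B_2',(\Z/2)^5\oplus\Z^5)$ with $(M_2,\Z^5)$ via the projection $\pi_2$. Finally Example~\ref{ex:fivebyfivemainmatrix} shows $M_2$ is $\Z$-conjugate, through Lemma~\ref{l:generalizeddirectlimitGinvAG}, to an upper triangular matrix with diagonal $(6,1,1,1,1)$, so Lemma~\ref{l:directlimitGinvAG}(b) yields $\lim_{\to}(M_2,\Z^5)\cong\Z[\tfrac16]\oplus\Z^4$, hence $H^2(\Omega)=\Z[\tfrac16]\oplus\Z^4$.

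The main obstacle is the concrete matrix arithmetic: producing the Smith normal form of the $36\times 45$ matrix $A^1$ together with the change-of-basis matrices $P,Q,\tilde P$, and then expressing $H^i(\gamma^\bullet)$ in those coordinates so as to verify both the exact vanishing of the induced $H^1$ map and the clean reduction of the induced $H^2$ map to the $5\times 5$ block $M_2$ that feeds into Lemma~\ref{l:directlimitGinvAG}. Everything else is a formal consequence of the preparatory lemmas of Sections~1--3.
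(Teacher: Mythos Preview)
Your proposal is correct and follows essentially the same route as the paper: compute $H^i(\Gamma)$ via the Smith normal form machinery of Lemmas~\ref{l:secondcohomology} and~\ref{l:cohomology1}, conjugate $B_1,B_2$ into those coordinates to see that $H^1(\gamma^\bullet)=0$ and that $H^2(\gamma^\bullet)$ reduces to the $10\times10$ block $B_2'$, and then invoke the preceding lemma together with Example~\ref{ex:fivebyfivemainmatrix} to get $\Z[\tfrac16]\oplus\Z^4$. One small slip: the matrix $J^TQ^{-1}A^0$ has shape $(E-r)\times V$, not $V\times(E-r)$, but this does not affect the argument.
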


That the first cohomology group $H^1(\Omega)=0$  is the zero group is very surprising.
The reason is that tiling spaces are very often Cantor fibre
bundles over tori. This usually puts at least a copy of $\Z^2$ into $H^1$. See \cite{SadunTilingSpacesAreCSFB}.
By Equation (\ref{e:ktheory}) we have the following proposition.
\begin{pro}
The K-theory of the continuous hull $\Omega$ is
\begin{eqnarray*}
  &&K^0(\Omega)=\Z[\tfrac16]\oplus\Z^5\\
  &&K^1(\Omega)=0.
\end{eqnarray*}

\end{pro}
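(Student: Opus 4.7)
The plan is to apply the isomorphism $H^i(\Omega,\Z)\cong \lim\limits_{\rightarrow}(H^i(\Gamma,\Z),H^i(\gamma^\bullet))$ from equation (\ref{e:cohomologyasdirlim}) separately at $i=0,1,2$, using the lemmas of Section 2 to describe $H^i(\Gamma)$ and the induced endomorphism, and the techniques of Section 3 to evaluate each direct limit. The degree-$0$ case is immediate: $\Gamma$ is connected so $H^0(\Gamma)=\Z$, and since collared vertices are fixed under subdivision we have $\gamma_0=\mathrm{id}$; hence the induced endomorphism on $H^0$ is the identity and $H^0(\Omega)=\Z$.

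For $H^2$, apply Lemma \ref{l:secondcohomology} to the $36\times 45$ matrix $A^1$: its Smith normal form $D=PA^1Q$ determines $H^2(\Gamma)\cong(\Z/2)^5\oplus\Z^5$ after discarding trivial $\Z/1$-summands. The induced endomorphism is the conjugation of $B_2$ by $P$, restricted to the quotient, and is therefore represented by a $10\times 10$ integer matrix $B_2'$ on $(\Z/2)^5\oplus\Z^5$. Its block form shows that the first five columns vanish, so every image already lies in the $\Z^5$-summand; thus the direct limit collapses to $\lim\limits_{\rightarrow}(M_2,\Z^5)$, where $M_2$ is the lower-right $5\times 5$ block. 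One then observes that $M_2$ is precisely the matrix $A$ of Example \ref{ex:fivebyfivemainmatrix}, yielding $H^2(\Omega)\cong\Z[\tfrac 16]\oplus\Z^4$.

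For $H^1$, apply Lemma \ref{l:cohomology1}: one obtains $H^1(\Gamma)\cong\Z^{14}/(J^TQ^{-1}A^0\Z^{10})$, and a second Smith normal form of $J^TQ^{-1}A^0$ reveals that the torsion part consists entirely of $\Z/1$-factors, leaving $H^1(\Gamma)\cong\Z^5$. The induced map on this quotient is $\tilde P J^TQ^{-1}\gamma_1QJ\tilde P^{-1}$, and one checks by direct computation that this matrix is identically zero on the free part. Consequently the direct limit $\lim\limits_{\rightarrow}(0,\Z^5)$ is trivial, so $H^1(\Omega)=0$.

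The main obstacle is computational rather than conceptual: computing the Smith normal forms of the large matrices $A^1$ and $J^TQ^{-1}A^0$, and then verifying the vanishing of the induced endomorphism on $H^1(\Gamma)$, requires careful bookkeeping and in practice computer assistance. This last vanishing is also the most striking step, since it is precisely what yields the surprising conclusion $H^1(\Omega)=0$, in contrast to the generic tiling-space behaviour of having at least $\Z^2\subset H^1$. Once the Smith forms are in hand, everything else, including the reduction of the degree-$2$ direct limit to Example \ref{ex:fivebyfivemainmatrix}, is essentially mechanical.
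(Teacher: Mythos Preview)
Your argument reproduces, essentially verbatim, the paper's computation of the \v Cech cohomology groups $H^0(\Omega)=\Z$, $H^1(\Omega)=0$, and $H^2(\Omega)=\Z[\tfrac16]\oplus\Z^4$: the same use of Lemmas~\ref{l:secondcohomology} and~\ref{l:cohomology1}, the same block reduction of $B_2'$ to $M_2$, and the same appeal to Example~\ref{ex:fivebyfivemainmatrix}. On that level the proposal is correct and matches the paper's approach exactly.

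However, the statement you are asked to prove is about $K^0(\Omega)$ and $K^1(\Omega)$, not about $H^\bullet(\Omega)$, and your write-up never makes the passage from cohomology to $K$-theory. In the paper this is the entire content of the proposition's proof: one invokes equation~(\ref{e:ktheory}) (Theorem~6.3 of Anderson--Putnam), which for a two-dimensional hull gives
\[
K^0(\Omega)\cong H^0(\Omega,\Z)\oplus H^2(\Omega,\Z),\qquad K^1(\Omega)\cong H^1(\Omega,\Z),
\]
and then reads off $K^0(\Omega)\cong\Z\oplus\bigl(\Z[\tfrac16]\oplus\Z^4\bigr)=\Z[\tfrac16]\oplus\Z^5$ and $K^1(\Omega)=0$ from Theorem~\ref{t:cohomologyofOmega}. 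As written, your proposal proves Theorem~\ref{t:cohomologyofOmega} rather than the proposition; you need to add this final one-line step to complete the argument.
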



\begin{thebibliography}{99}

\bibitem{RiemSurfBeardonBook}
  A. F. Beardon,
  \emph{ A Primer on Riemann Surfaces.}
  Cambridge University Press,
  1984.

\bibitem{Bellissard06}
  Jean Bellissard, Riccardo Benedetti, and Jean-Marc Gambaudo,
  \emph{Spaces of Tilings, Finite Telescopic Approximations
and Gap-Labeling.}
  Commun. Math. Phys. 261,
  (2006) 1-41.



\bibitem{AConnes}
  A. Connes,
  \emph{ Non-commutative Geometry.}\\
 Academic Press,
 San Diego  (1994).

\bibitem{FloydFiniteSubdivisionRules01}
   J. W. Cannon, W. J. Floyd, and W. R. Parry,
   \emph{Finite subdivision rules.}\\
   http://www.math.vt.edu/people/floyd/research/papers/fsr.pdf,
   (2001).


\bibitem{Hatcher02}
  Allen Hatcher,
  \emph{ Algebraic Topology.}
  Cambridge University Press,
  2002.


\bibitem{TopologyJanich84}
   K. J\"anich, S. Levy,
   \emph{Topology.}
   Springer-Verlag New York Inc.,
   (1984).




\bibitem{May99}
  J. P. May,
  \emph{ A concise course in Algebraic Topology.}
  The Univeristy of Chicago Press.
  Chicago and London,
  1999.


\bibitem{Mozes89}
   Shahar Mozes,
   \emph{Tilings, substitution systems and dynamical systems generated by them.}
   Journal D'analyse Mathematique,
   (1989).



\bibitem{FinDimApproxCstaralgebrasBrownOzawa}
   Nathanial Patrick Brown, Narutaka Ozawa,
  \emph{ C*-Algebras and Finite-Dimensional Approximations.}
  American Mathematical Society,
  2008.

\bibitem{Putnam00OrderedKtheory}
 Ian F. Putnam,
 \emph{The ordered $K$-theory of $C^*$-algebras associated with substitution tilings.}
 Commun. Math. Phys. 214,
 (2000) 593-605.

\bibitem{PutnamBible95}
  Jared E. Anderson and Ian F. Putnam.
  \emph{Topological Invariants for Substitution Tilings and their Associated $C^*$-algebras.}
  Department of Mathematics and Statistics, University of Victoria, Victoria  B.C. Canada.
  (1995) 1-45.



\bibitem{PutnamCstarKtheory00}
  Johannes Kellendonk and Ian F. Putnam,
  \emph{Tilings, $C^*$-algebras and $K$-theory.}
  Directions in mathematical quasicrystals, CRM Monogr. Ser., 13, Amer. Math. Soc., Provicence, RI
  (2000) 177-206.




\bibitem{MRSnonFLCpentTiling}
   Maria Ramirez-Solano,
  \emph{ A non FLC regular pentagonal tiling of the plane.}\\
   arXiv:1303.2000,
  2013.

\bibitem{MRSdiscretehull}
   Maria Ramirez-Solano,
  \emph{ Construction of the discrete hull for the combinatorics of a  regular pentagonal tiling of the plane.}
   arXiv:1303.5375,
  2013.

\bibitem{MRScontinuoushull}
   Maria Ramirez-Solano,
  \emph{  Construction of the continuous hull for the combinatorics of a regular pentagonal tiling of the plane.}
   arXiv:1303.5676,
  2013.

\bibitem{MRSinverselimit}
   Maria Ramirez-Solano,
  \emph{Continuous hull of a combinatorial pentagonal tiling as an inverse limit.}
   	arXiv:1304.2652,
  2013.


\bibitem{Rob91}
  E. Arthur Robinson, Jr.,
  \emph{ Symbolic Dynamics and Tilings of $\R^d$.}
  George Washington University.
  Washington DC.
  1991.


\bibitem{Sadun08}
  Lorenzo Sadun,
  \emph{ Topology of Tiling Spaces.}
  University Lecture Series Vol. 46,
  Providence, Rhode Island,
  2008.

\bibitem{SadunTilingSpacesAreCSFB}
   Lorenzo Sadun, R. F. Williams,
  \emph{ Tiling Spaces Are Cantor Set Fiber Bundles.}\\
   http://arxiv.org/pdf/math/0105125.pdf\\
  2001.


\bibitem{Sol98}
  B. Solomyak,
  \emph{ Nonperidicity implies unique composition of self-similar translationally-finite tilings.}
  Disc. Comp. Geom.
  1998.

\bibitem{StephensonBowers97}
  Philip L. Bowers and Kenneth Stephenson,
  \emph{A "regular" pentagonal tiling of the plane.}
  Conformal geometry and dynamics.
  An electronic journal of the American Mathematical Society,
  (1997) 58-86.




\end{thebibliography}
\end{document}